\documentclass[letterpaper,10pt]{article}

\usepackage[margin=0.5in]{geometry}

\usepackage{fullpage}
\usepackage{enumerate}
\usepackage{authblk}
\usepackage{hyperref}
\usepackage{verbatim}
\usepackage{section, amsthm, textcase, setspace, amssymb, amscd, lineno, amsmath, amsfonts, latexsym, fancyhdr, longtable, ulem, mathtools}
\usepackage{epsfig, graphicx, pstricks,pst-grad,pst-text,tikz,colortbl}
\usepackage{epsf}
\usepackage{graphicx, color}
\usepackage{float}
\usepackage[rflt]{floatflt}
\usepackage{amsfonts}
\usepackage{latexsym,enumitem}
\usetikzlibrary{fit,matrix,positioning}
\usepackage{pdflscape}
\usetikzlibrary{decorations.pathreplacing}
\usepackage[most]{tcolorbox}
\usepackage{lipsum}
\usetikzlibrary{decorations.markings}
\usepackage[most]{tcolorbox}

\usepackage{mathrsfs}

\newcommand{\ldownarrow}{\Big\downarrow}

\newcommand{\precdot}{\prec\mathrel{\mkern-5mu}\mathrel{\cdot}}

\newtheorem{theorem}{Theorem}[section]
\newtheorem{lemma}[theorem]{Lemma}

\newtheorem{example}[theorem]{Example}

\newtheorem{prop}[theorem]{Proposition}
\newtheorem*{theorem*}{Theorem}

\def\celx(#1,#2)[#3]
{
  \node at (#1+0.5, #2+0.5) {$#3$};
  \draw (#1,#2) rectangle (#1+1, #2+1);
}

\DeclareMathOperator{\MC}{MC}
\DeclareMathOperator{\room}{room}
\DeclareMathOperator{\blockers}{blockers}

\usepackage{xargs}
\usepackage[colorinlistoftodos,prependcaption,textsize=tiny,linecolor=red,backgroundcolor=red!25,bordercolor=red]{todonotes}
\setlength{\marginparwidth}{2cm}

\usepackage[backend=bibtex]{biblatex}
\addbibresource{Bibtex_Kohnert.bib}

\begin{document}

\title{Two combinatorial puzzles arising from the theory of Kohnert polynomials}

\author[*]{Theo Koss}

\author[**]{Nicholas Mayers}

\author[***]{Alex Moon}

\affil[*]{Department of Mathematics, University of Wisconsin-Milwaukee, Milwaukee, WI, 53211}

\affil[**]{Department of Mathematics, Kennesaw State University, Kennesaw, GA, 30144}

\affil[***]{Department of Mathematics, Dartmouth College, Hanover, NH, 03755}

\maketitle

\bigskip
\begin{abstract} 
\noindent
Motivated by recent work of Hanser and Mayers, we study two combinatorial puzzles arising from the theory of Kohnert polynomials. Such polynomials are defined as generating polynomials for certain collections of diagrams consisting of unit cells arranged in the first quadrant generated from an initial ``seed diagram" by applying what are called ``Kohnert moves". Each Kohnert move affects the position of at most cell of a diagram, attempting to move the rightmost cell of a given row to the first available position below and in the same column. In this paper, we study the combinatorial puzzles defined as follows: given a 
diagram $D$, form a diagram that is fixed by all Kohnert moves by applying either the fewest or most possible number of Kohnert moves. For both puzzles, we find complete solutions as well as methods for combinatorially computing the associated number of Kohnert moves in terms of the initial diagram $D$. 
\end{abstract}

\noindent
\textit{Key Words and Phrases}: combinatorial puzzle, Kohnert polynomial, Kohnert move

\section{Introduction}

In recent work \cite{Puzzle1}, Hanser and Mayers introduce a combinatorial puzzle which arises from the theory of Lascoux polynomials \cite{Lascoux}. For their puzzle, one is given an arbitrary diagram $D$ consisting of a finite number of unit cells arranged into rows and columns in the first quadrant. From such a diagram, one can form other diagrams by applying what are called ``Kohnert" and ``ghost moves". Both such moves cause at most one cell of a diagram to move to a position in the same column and a lower row, with ghost moves also creating special ``ghost cells". Starting from the given diagram $D$, the goal of their puzzle is to use Kohnert and ghost moves to produce a diagram with the maximum possible number of ghost cells.\footnote{For the connection to Lascoux polynomials, it was shown in \cite{Pan1} that for special choices of initial diagrams -- called ``key diagrams" -- the collection of diagrams formed by applying sequences of ghost and Kohnert moves can be weighted in such a way that their sum is a given Lascoux polynomial. Restricting to just Kohnert moves and weighting diagrams as in the case of Lascoux polynomials, one is led to the family of Kohnert polynomials \cite{KP3,KP2,KP1} which contains both Schubert \cite{AssafSchu,LS82a} and key polynomials \cite{Dem74a,Dem74b,Kohnert}.} Here, we consider two combinatorial puzzles which arise when one applies only Kohnert moves to a given initial diagram, providing complete solutions to both.


In order to describe the combinatorial puzzles of interest, first we make precise the effect of applying a Kohnert move. Given a diagram $D$, applying a Kohnert move at row $r\in\mathbb{Z}_{>0}$ causes the rightmost cell in row $r$ to move to the first empty position below and in the same column, provided that such a position exists in the first quadrant. For example, in Figure~\ref{fig:intro} (a) we illustrate a diagram $D$ and in Figure~\ref{fig:intro} (b) the diagram formed from $D$ by applying a Kohnert move at row 3.

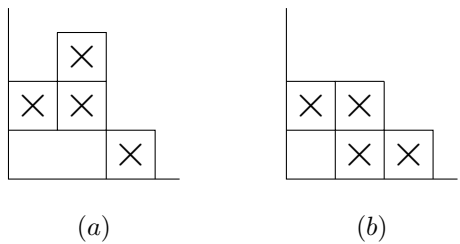
\begin{figure}[H]
    \centering
    $$\begin{tikzpicture}[scale=0.65]
  \node at (1.5, 2.5) {$\bigtimes$};
  \node at (0.5, 1.5) {$\bigtimes$};
  \node at (1.5, 1.5) {$\bigtimes$};
  \node at (2.5, 0.5) {$\bigtimes$};
  \node at (1.75,-1) {$(a)$};
  \draw (0,3.5)--(0,0)--(3.5,0);
  \draw (1,3)--(2,3)--(2,2)--(1,2)--(1,3);
  \draw (2,2)--(2,1)--(1,1)--(1,2)--(0,2);
  \draw (0,1)--(1,1);
  \draw (2,0)--(2,1)--(3,1)--(3,0);
\end{tikzpicture}\quad\quad\quad\quad \begin{tikzpicture}[scale=0.65]
  \node at (1.5, 0.5) {$\bigtimes$};
  \node at (0.5, 1.5) {$\bigtimes$};
  \node at (1.5, 1.5) {$\bigtimes$};
  \node at (2.5, 0.5) {$\bigtimes$};
  \node at (1.75,-1) {$(b)$};
  \draw (0,3.5)--(0,0)--(3.5,0);
  \draw (2,2)--(1,2);
  \draw (2,2)--(2,1)--(1,1)--(1,2)--(0,2);
  \draw (1,0)--(1,1);
  \draw (0,1)--(1,1);
  \draw (2,0)--(2,1)--(3,1)--(3,0);
\end{tikzpicture}$$
    \caption{Diagrams and Kohnert moves}
    \label{fig:intro}
\end{figure}

\noindent
Now, starting with a diagram $D$ and applying Kohnert moves, within a finite number of moves one will form a diagram $\widehat{D}$ which is fixed by all Kohnert moves. We refer to such a diagram $\widehat{D}$ that is fixed by all Kohnert moves as a \textbf{minimal diagram} due to a connection to an associated poset, called a \textbf{Kohnert poset} (see Section~\ref{sec:prelim}). For an example, the diagram illustrated in Figure~\ref{fig:intro} (b) is minimal. With this, the combinatorial puzzles which concern us here are defined as follows. Given an arbitrary diagram $D$, form a minimal diagram using either as few or as many Kohnert moves as possible. As noted above, we provide complete solutions to both puzzles. In particular, for both puzzles we
\begin{enumerate}
    \item[(1)] describe sequences of Kohnert moves which correspond to a solution (see the ends of Sections~\ref{sec:max} and~\ref{sec:min}) and
    \item[(2)] provide combinatorial means of computing the associated number of Kohnert moves in a solution in terms of only the initial given diagram $D$ (see Theorems~\ref{thm:max} and~\ref{thm:min}).
\end{enumerate}
It is worth noting that in the case of the puzzle corresponding to using the maximum number of Kohnert moves, the combinatorial value computed in (2) corresponds to the length of the associated Kohnert poset.

The remainder of the paper is organized as follows. In Section~\ref{sec:prelim}, we set the definitions and notation required to study our combinatorial puzzles. Following this, in Sections~\ref{sec:max} and~\ref{sec:min}, we establish our main results. 


\section{Preliminaries}\label{sec:prelim}

As mentioned in the introduction, our puzzles involve diagrams and certain moves on diagrams. Formally, a \textbf{diagram} $D$ is an array of finitely many cells in $\mathbb{Z}_{>0}\times\mathbb{Z}_{>0}$ (see Figure~\ref{fig:diagram}). We may also think of a diagram as the set of row-column coordinates of the cells defining it, where rows are labeled from bottom to top and columns from left to right. Therefore, we write $(r,c)\in D$ if the cell in row $r$ and column $c$ is in the diagram $D$; otherwise, if $D$ contains no cell in position $(r,c)$, then we write $(r,c)\notin D$. We make precise the intuitive idea of an ``empty'' position in a diagram as follows. An \textbf{empty position} of $D$ is defined as a position containing no cell in $D$, but which lies below at least one cell of $D$. We denote the set of all such positions with $\mathrm{empty}(D)$, i.e.,
$$\mathrm{empty}(D):=\{(r,c)\notin D~|~\exists \tilde{r}>r~\text{such that}~(\tilde{r},c)\in D\}.$$ In addition to the notation introduced above, if the topmost nonempty row of $D$ is row $m$, then we define the \textbf{row weight} of $D$ by $\mathrm{rwt}(D):=(r_1,r_2,\hdots,r_m)$ where $r_i$ is the number of cells in row $i$ of $D$ for $1\le i\le m$. Similarly, if the rightmost nonempty column of $D$ is $n$, then we define the \textbf{column weight} of $D$ by $\mathrm{cwt}(D):=(c_1,c_2,\hdots,c_n)$ where $c_i$ is the number of cells in column $i$ of $D$ for $1\le i\le n$.

\begin{example}
In Figure~\ref{fig:diagram}, we illustrate the diagram $D_0=\{(1,1),(2,1),(2,2),(2,3),(3,3)\}$. For $D_0$, we have $\mathrm{empty}(D_0)=\{(1,2),(1,3)\}$, $\mathrm{rwt}(D_0)=(1,3,1)$, and $\mathrm{cwt}(D_0)=(2,1,2)$.

\begin{figure}[H]
    \centering
    $$\begin{tikzpicture}[scale=0.65]
  \node at (0.5, 0.5) {$\bigtimes$};
  \node at (0.5, 1.5) {$\bigtimes$};
  \node at (1.5, 1.5) {$\bigtimes$};
  \node at (2.5, 2.5) {$\bigtimes$};
  \node at (2.5, 1.5) {$\bigtimes$};
  \draw (0,3.5)--(0,0)--(3.5,0);
  \draw (1,0)--(1,2)--(0,2);
  \draw (0,1)--(1,1);
  \draw (1,1)--(2,1)--(2,2)--(1,2)--(1,1);
  \draw (2,1)--(3,1)--(3,2)--(2,2)--(2,1);
  \draw (2,2)--(3,2)--(3,3)--(2,3)--(2,2);
\end{tikzpicture}$$
    \caption{Diagram $D_0=\{(1,1),(2,1),(2,2),(2,3),(3,3)\}$}
    \label{fig:diagram}
\end{figure}
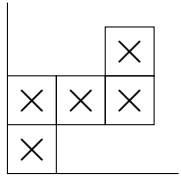
\end{example}

Recall from the introduction that given a diagram $D$ and $r\in\mathbb{Z}_{>0}$, applying a \textbf{Kohnert move} at row $r$ of $D$ results in moving the rightmost cell in row $r$ of $D$ to the first \textit{empty} position below and in the same column (if such a position exists). If applying a Kohnert move at row $r$ of $D$ causes the cell in position $(r,c)\in D$ to move down to position $(r',c)$, forming the diagram $D'$, then we write 
$$D'=\mathcal{K}(D,r)=D\ldownarrow^{(r,c)}_{(r',c)}$$ 
and refer to this Kohnert move as moving the cell $(r,c)$ to position $(r',c)$. If there is no empty position below and in the same column as the rightmost cell $(r,c)$ in row $r$ of $D$, then applying a Kohnert move at row $r$ of $D$ does nothing, i.e., $\mathcal{K}(D,r)=D$. Kohnert moves that result in a new diagram are called \textbf{nontrivial}, while those that do not are called \textbf{trivial}. We let $\mathrm{KD}(D)$ denote the set of all diagrams that can be obtained from $D$ by applying a (possibly empty) sequence of Kohnert moves. Note that although some diagrams can be formed in multiple ways from an initial diagram $D$, the collection $\mathrm{KD}(D)$ is a set and does not contain repeated elements. Among the diagrams of $\mathrm{KD}(D)$, we denote the collection of those for which all Kohnert moves are trivial by $\textrm{Min}(D)$, i.e., $$\mathrm{Min}(D):=\{\tilde{D}\in \mathrm{KD}(D)~|~\mathcal{K}(\tilde{D},r)=\tilde{D}~\forall r\in\mathbb{Z}_{>0}\}.$$ The diagrams of $\mathrm{Min}(D)$ are referred to as \textbf{minimal diagrams}. For an example of $\mathrm{KD}(D)$ and $\mathrm{Min}(D)$, see Example~\ref{ex:KDD}.

We now define our combinatorial puzzles of interest. Given a diagram $D$, form a diagram belonging to $\mathrm{Min}(D)$ using either
\begin{enumerate}
    \item[$(1)$] the largest number of nontrivial Kohnert moves possible or
    \item[$(2)$] the least number of nontrivial Kohnert moves possible.
\end{enumerate}
In addition to finding solutions to these puzzles, we also establish combinatorial means for determining when one has solved such a puzzle; that is, intuitively, given a diagram $D$, we establish combinatorial means of computing
\begin{enumerate}
    \item[$(1)$] the maximum number of nontrivial Kohnert moves one can apply to $D$ to form a diagram of $\mathrm{Min}(D)$, denoted $\mathrm{MC}(D)$ and
    \item[$(2)$] the minimum number of nontrivial Kohnert moves one needs to apply to $D$ to form a diagram of $\mathrm{Min}(D)$, denoted $\mathrm{mc}(D)$.
\end{enumerate}
To provide more formal definitions of $\mathrm{MC}(D)$ and $\mathrm{mc}(D)$, we set the following notation. For $\tilde{D}\in\mathrm{KD}(D)$, let $\mathrm{KSeq}(D,\tilde{D})$ denote the set of all sequences of rows $(r_1,\hdots,r_n)\in\mathbb{Z}_{>0}^n$ for which setting $D_0=D$ and $D_i=\mathcal{K}(D_{i-1},r_i)$ for $i\in[n]$, we have $D_{i-1}\neq D_i$ for $i\in [n]$ and $D_n=\tilde{D}$; that is, $\mathrm{KSeq}(D,\tilde{D})$ is the set of all sequences of rows for which when the associated nontrivial Kohnert moves are applied to $D$ in sequence, the resulting diagram is $\tilde{D}$. Using this notation, we have $$\mathrm{MC}(D)=\max_{\tilde{D}\in\mathrm{Min}(D)}\{n~|~(r_1,\hdots,r_n)\in \mathrm{KSeq}(D,\tilde{D})\}$$ and $$\mathrm{mc}(D)=\min_{\tilde{D}\in\mathrm{Min}(D)}\{n~|~(r_1,\hdots,r_n)\in \mathrm{KSeq}(D,\tilde{D})\}.$$

\begin{example}\label{ex:KDD}
In Figure~\ref{fig:KD}, we illustrate the diagrams of $KD(D_0)$ for the diagram $D_0$ of Figure~\ref{fig:diagram}. For $D_0$, we have $\mathrm{Min}(D_0)=\{D_2,D_4\}$, $\mathrm{KSeq}(D_0,D_2)=\{(3),(2,3)\}$, and $\mathrm{KSeq}(D_0,D_4)=\{(2,2,3)\}$. Consequently, $\mathrm{MC}(D_0)=3$ and $\mathrm{mc}(D_0)=1$.
    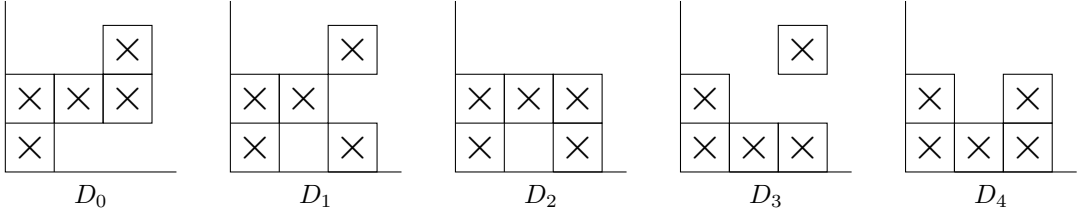
\begin{figure}[H]
    \centering
    $$\begin{tikzpicture}[scale=0.65]
  \node at (0.5, 0.5) {$\bigtimes$};
  \node at (0.5, 1.5) {$\bigtimes$};
  \node at (1.5, 1.5) {$\bigtimes$};
  \node at (2.5, 2.5) {$\bigtimes$};
  \node at (2.5, 1.5) {$\bigtimes$};
  \draw (0,3.5)--(0,0)--(3.5,0);
  \draw (1,0)--(1,2)--(0,2);
  \draw (0,1)--(1,1);
  \draw (1,1)--(2,1)--(2,2)--(1,2)--(1,1);
  \draw (2,1)--(3,1)--(3,2)--(2,2)--(2,1);
  \draw (2,2)--(3,2)--(3,3)--(2,3)--(2,2);
  \node at (1.75,-0.5) {$D_0$};
\end{tikzpicture}\quad\quad\begin{tikzpicture}[scale=0.65]
  \node at (0.5, 0.5) {$\bigtimes$};
  \node at (0.5, 1.5) {$\bigtimes$};
  \node at (1.5, 1.5) {$\bigtimes$};
  \node at (2.5, 2.5) {$\bigtimes$};
  \node at (2.5, 0.5) {$\bigtimes$};
  \draw (0,3.5)--(0,0)--(3.5,0);
  \draw (1,0)--(1,2)--(0,2);
  \draw (0,1)--(1,1);
  \draw (1,1)--(2,1)--(2,2)--(1,2)--(1,1);
  \draw (2,0)--(3,0)--(3,1)--(2,1)--(2,0);
  \draw (2,2)--(3,2)--(3,3)--(2,3)--(2,2);
  \node at (1.75,-0.5) {$D_1$};
\end{tikzpicture}\quad\quad\begin{tikzpicture}[scale=0.65]
  \node at (0.5, 0.5) {$\bigtimes$};
  \node at (0.5, 1.5) {$\bigtimes$};
  \node at (1.5, 1.5) {$\bigtimes$};
  \node at (2.5, 1.5) {$\bigtimes$};
  \node at (2.5, 0.5) {$\bigtimes$};
  \draw (0,3.5)--(0,0)--(3.5,0);
  \draw (1,0)--(1,2)--(0,2);
  \draw (0,1)--(1,1);
  \draw (1,1)--(2,1)--(2,2)--(1,2)--(1,1);
  \draw (2,0)--(3,0)--(3,1)--(2,1)--(2,0);
  \draw (2,1)--(3,1)--(3,2)--(2,2)--(2,1);
  \node at (1.75,-0.5) {$D_2$};
\end{tikzpicture}\quad\quad\begin{tikzpicture}[scale=0.65]
  \node at (0.5, 0.5) {$\bigtimes$};
  \node at (0.5, 1.5) {$\bigtimes$};
  \node at (1.5, 0.5) {$\bigtimes$};
  \node at (2.5, 2.5) {$\bigtimes$};
  \node at (2.5, 0.5) {$\bigtimes$};
  \draw (0,3.5)--(0,0)--(3.5,0);
  \draw (1,0)--(1,2)--(0,2);
  \draw (0,1)--(1,1);
  \draw (1,0)--(2,0)--(2,1)--(1,1)--(1,0);
  \draw (2,0)--(3,0)--(3,1)--(2,1)--(2,0);
  \draw (2,2)--(3,2)--(3,3)--(2,3)--(2,2);
  \node at (1.75,-0.5) {$D_3$};
\end{tikzpicture}\quad\quad\begin{tikzpicture}[scale=0.65]
  \node at (0.5, 0.5) {$\bigtimes$};
  \node at (0.5, 1.5) {$\bigtimes$};
  \node at (1.5, 0.5) {$\bigtimes$};
  \node at (2.5, 1.5) {$\bigtimes$};
  \node at (2.5, 0.5) {$\bigtimes$};
  \draw (0,3.5)--(0,0)--(3.5,0);
  \draw (1,0)--(1,2)--(0,2);
  \draw (0,1)--(1,1);
  \draw (1,0)--(2,0)--(2,1)--(1,1)--(1,0);
  \draw (2,0)--(3,0)--(3,1)--(2,1)--(2,0);
  \draw (2,1)--(3,1)--(3,2)--(2,2)--(2,1);
  \node at (1.75,-0.5) {$D_4$};
\end{tikzpicture}$$
    \caption{The set $KD(D_0)$ for $D_0=\{(1,1),(2,1),(2,2),(2,3),(3,3)\}$}
    \label{fig:KD}
\end{figure}
\end{example}

Finally, as it will be helpful in the sections that follow, we note that one can associate with $\mathrm{KD}(D)$ a natural poset structure. Recall that a \textbf{finite poset} $(\mathcal{P},\preceq_{\mathcal{P}})$ consists of a finite underlying set $\mathcal{P}$ along with a binary relation $\preceq_{\mathcal{P}}$ between the elements of $\mathcal{P}$ which is reflexive, anti-symmetric, and transitive. For a diagram $D$, given $D_1,D_2\in \mathrm{KD}(D)$, we say $D_2\preceq_{\mathrm{KD}(D)} D_1$ if $D_2$ can be obtained from $D_1$ by applying some sequence of Kohnert moves (see~\cite{KP3,KP2}). Ongoing, we denote the poset $(\mathrm{KD}(D),\preceq_{\mathrm{KD}(D)})$ by $\mathcal{P}(D)$, referring to it as the \textbf{Kohnert poset} associated with $D$, and denote $\preceq_{\mathrm{KD}(D)}$ by $\preceq$. Moreover, for $D_1,D_2\in\mathcal{P}(D)$, we write $D_2\prec D_1$ when $D_2\preceq D_1$ and $D_2\neq D_1$, and we write $D_2\precdot D_1$ when $D_2\prec D_1$ and there exists no $D_3\in\mathcal{P}(D)$ for which $D_2\prec D_3\prec D_1$. Intuitively, $D_2 \precdot D_1$ implies that $D_2$ can be obtained from $D_1$ by applying exactly one nontrivial Kohnert move. Note that, by definition, $\mathrm{Min}(D)$ corresponds to the collection of minimal elements in $\mathcal{P}(D)$. A \textbf{chain} in $\mathcal{P}(D)$ is a subset $S\subset \mathcal{P}(D)$ for which given any $D_1,D_2\in S$, either $D_1\preceq D_2$ or $D_2\preceq D_1$, i.e., any two elements of the subset are comparable. For a chain $\Gamma=\{D_0\succ D_1\succ\cdots\succ D_n\}$ of $\mathcal{P}(D)$,
\begin{itemize}
    \item if $\Gamma$ is contained in no larger chain of $\mathcal{P}(D)$, then we refer to $\Gamma$ as a \textbf{maximal chain}; and
    \item if $\Gamma$ is such that for each $j\in [n]$ there exists $r_j\in\mathbb{Z}_{>0}$ satisfying $\mathcal{K}(D_{j-1},r_j)=D_j$, then we refer to $\Gamma$ as a \textbf{Kohnert chain}.
\end{itemize}
Note that if $\Gamma=\{D_0\succ D_1\succ\cdots\succ D_n\}$ is a maximal chain, then it must be the case that $D_{j}\precdot D_{j-1}$ for $j\in [n]$. Thus, since $D_{j}\precdot D_{j-1}$ implies $D_j=\mathcal{K}(D_{j-1},r)$ for some $r$ as noted above, it follows that any maximal chain of $\mathcal{P}(D)$ is also a Kohnert chain. Defining the \textbf{rank} of $\mathcal{P}(D)$, denoted $\mathrm{rk}(D)$, to be one less than the maximal cardinality of a chain in $\mathcal{P}(D)$, we have the following.

\begin{lemma}\label{lem:poset}
    If $D$ is a diagram, then $\mathrm{MC}(D)=\mathrm{rk}(D)$.
\end{lemma}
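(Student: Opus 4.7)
The plan is to prove the two inequalities $\mathrm{MC}(D)\le\mathrm{rk}(D)$ and $\mathrm{MC}(D)\ge\mathrm{rk}(D)$ by translating between sequences in $\mathrm{KSeq}(D,\tilde{D})$ and chains in $\mathcal{P}(D)$. The central bookkeeping observation is that every element of $\mathrm{KD}(D)$ is by definition obtainable from $D$ by a sequence of Kohnert moves, so $D$ is the unique maximum of $\mathcal{P}(D)$; hence every maximal chain of $\mathcal{P}(D)$ starts at $D$ and ends at some element of $\mathrm{Min}(D)$.

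For the upper bound, I would take an arbitrary $\tilde{D}\in\mathrm{Min}(D)$ and a sequence $(r_1,\ldots,r_n)\in\mathrm{KSeq}(D,\tilde{D})$ realizing $n=\mathrm{MC}(D)$. Setting $D_0=D$ and $D_i=\mathcal{K}(D_{i-1},r_i)$, the nontriviality condition $D_{i-1}\neq D_i$ forces $D_i\prec D_{i-1}$, and then antisymmetry together with transitivity of $\preceq$ makes $\{D_0,D_1,\ldots,D_n\}$ a chain of $n+1$ pairwise distinct elements in $\mathcal{P}(D)$. Hence $n+1\le\mathrm{rk}(D)+1$, giving $\mathrm{MC}(D)\le\mathrm{rk}(D)$.

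For the lower bound, I would pick a chain $\Gamma=\{D_0\succ D_1\succ\cdots\succ D_N\}$ of maximal cardinality $N+1=\mathrm{rk}(D)+1$. Since $D$ is the maximum of $\mathcal{P}(D)$, we can extend $\Gamma$ upward if $D_0\neq D$; maximality therefore forces $D_0=D$. Similarly, if $D_N\notin\mathrm{Min}(D)$ then some nontrivial Kohnert move would produce a strictly smaller element, contradicting maximality, so $D_N\in\mathrm{Min}(D)$. By the remark just before Lemma~\ref{lem:poset}, any maximal chain is a Kohnert chain, so for each $j\in[N]$ there exists $r_j$ with $\mathcal{K}(D_{j-1},r_j)=D_j$; since the $D_j$ are distinct, each such move is nontrivial. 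Thus $(r_1,\ldots,r_N)\in\mathrm{KSeq}(D,D_N)$, witnessing $\mathrm{MC}(D)\ge N=\mathrm{rk}(D)$.

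The only step that requires any care is the one invoking the fact that a maximal chain is automatically a Kohnert chain: this depends on the assertion (made in the preceding paragraph of the excerpt) that $D_j\precdot D_{j-1}$ implies $D_j=\mathcal{K}(D_{j-1},r)$ for some $r$. Everything else is a routine matching of definitions, so I do not expect any genuine obstacle; the lemma is essentially the observation that $\mathrm{MC}$ and $\mathrm{rk}$ are two names for the same maximum, once one notes that $D$ sits at the top of $\mathcal{P}(D)$ and $\mathrm{Min}(D)$ sits at the bottom.
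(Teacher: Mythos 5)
Your proposal is correct and follows essentially the same two-inequality argument as the paper: a maximizing Kohnert sequence yields a chain of the same cardinality, and conversely a maximum-cardinality chain is a maximal chain, hence a Kohnert chain from $D$ down to a minimal diagram. You merely spell out a few details the paper leaves implicit (that such a chain must start at $D$ and end in $\mathrm{Min}(D)$), which is a harmless refinement rather than a different route.
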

\begin{proof}
    If $\mathrm{MC}(D)=N$ and $(r_1,\hdots,r_N)\in \mathrm{KSeq}(D,\widehat{D})$ for $\widehat{D}\in\mathrm{Min}(D)$, then $$D=D_0\succ D_1=\mathcal{K}(D_0,r_1)\succ\cdots\succ D_N=\mathcal{K}(D_{N-1},r_N)$$ defines a chain in $\mathcal{P}(D)$ of cardinality $N+1$. Thus, $\mathrm{MC}(D)\le \mathrm{rk}(D)$. On the other hand, evidently, a chain of maximal cardinality must be a maximal chain. As noted above, this implies that the rank of $\mathcal{P}(D)$ is realized by a Kohnert chain. Since a Kohnert chain corresponds to a sequence of nontrivial Kohnert moves which can be applied starting from $D$, the number of which is equal to one less than its cardinality, it follows that $\mathrm{rk}(D)\le \mathrm{MC}(D)$ and the result follows.
\end{proof}


\section{Maximum Number of Moves}\label{sec:max}

In this section, we consider the problem of computing $\MC(D)$. To state the main result of this section, we require the following notation. For a cell $(r, c) \in D$ and $\tilde{c} > 0$, define 
\[\blockers_{D, \tilde{c}}(r, c) = \{(\tilde{r}, \tilde{c}) \in D \mid \tilde{r} \leq r\},\]
i.e., $\blockers_{D,\tilde{c}}(r, c)$ is the collection of cells of $D$ in column $\tilde{c}$ that are weakly below row $r$, and
\[\room_D(r, c) = r - \max_{\tilde{c} \geq c} \{|\blockers_{D, \tilde{c}}(r, c)|\}.\] With this notation, the main result of this section is as follows.

\begin{theorem}\label{thm:max}
    For a diagram $D$,
    \[\MC(D) = \sum_{(r, c) \in D} \room_D(r, c).\]
\end{theorem}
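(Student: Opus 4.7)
Set $\Phi(D) := \sum_{(r,c) \in D} \room_D(r, c)$. My plan is to show $\MC(D) = \Phi(D)$ by establishing three facts: (i) $\Phi(D) = 0$ if and only if $D$ is minimal; (ii) every non-trivial Kohnert move decreases $\Phi$ by at least $1$; and (iii) the Kohnert move at the lowest row admitting a non-trivial move decreases $\Phi$ by exactly $1$. Granted these, the upper bound $\MC(D) \le \Phi(D)$ follows by summing (ii) along any Kohnert sequence from $D$ to a minimal diagram and invoking Lemma~\ref{lem:poset}, while the lower bound $\MC(D) \ge \Phi(D)$ is realized by iterating (iii) to produce an explicit Kohnert chain of length $\Phi(D)$. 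Fact (i) is straightforward: minimality of $D$ is equivalent to the statement that each non-empty row's rightmost cell $(r, c^*)$ has column $c^*$ full from $1$ to $r$, which witnesses $\room_D(r, c) = 0$ for every cell $(r, c)$ in that row via the choice $\tilde c = c^*$ in the defining max; conversely, $\Phi(D) = 0$ applied to the rightmost cell of each row forces an argmax $\tilde c \ge c^*$ to be a column full to $r$, and rightmost-ness forces $\tilde c = c^*$, so the move at row $r$ is trivial.

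For (ii), consider a non-trivial Kohnert move sending rightmost $(r, c) \in D$ down by $d \ge 1$ to $(r - d, c)$, producing $D'$. A cell $(r'', c'') \in D \setminus \{(r, c)\}$ has its room altered by the move only when $c'' \le c$ and $r'' \in [r - d, r - 1]$, and even then only when column $c$ is an argmax in the original $\max_{\tilde c \ge c''}|\blockers_{D, \tilde c}(r'', c'')|$, in which case the room decreases by exactly $1$. Letting $K$ denote the number of such cells and setting $A := \max_{\tilde c \ge c}|\blockers_{D, \tilde c}(r, c)|$ and $B := \max_{\tilde c \ge c}|\blockers_{D', \tilde c}(r - d, c)|$, a direct calculation yields $\Phi(D) - \Phi(D') = d + K + B - A$. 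Writing $X := |\blockers_{D, c}(r - d, c)|$, $Y := \max_{\tilde c > c}|\blockers_{D, \tilde c}(r, c)|$, and $Z := \max_{\tilde c > c}|\blockers_{D, \tilde c}(r - d, c)|$, the descent condition implies $|\blockers_{D, c}(r, c)| = X + d$ and the rightmost-ness of $(r, c)$ implies $|\blockers_{D, \tilde c}(r, c)| = |\blockers_{D, \tilde c}(r - 1, c)|$ for $\tilde c > c$, so $A = \max(X + d, Y)$ and $B = \max(X + 1, Z)$. Combined with the elementary bound $Y - Z \le d - 1$ (any column's blocker count grows by at most one per row), a case analysis on where the maxima defining $A$ and $B$ are achieved rules out the case $A = Y, B = X + 1$ (which would force $Y - Z \ge d$) and forces $A - B \le d - 1$ in all remaining cases, so $\Phi(D) - \Phi(D') \ge K + 1 \ge 1$.

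For (iii), let $r^*$ be the lowest row admitting a non-trivial Kohnert move, with column $c^*$ and descent $d$. The hypothesis that every row below $r^*$ admits only trivial moves implies that whenever $(r'', c'') \in D$ with $r'' < r^*$, the rightmost cell in row $r''$ lies in a column that is full from $1$ to $r''$, so $\max_{\tilde c \ge c''}|\blockers_{D, \tilde c}(r'', c'')| = r''$. Since $(r^* - d, c^*) \notin D$, column $c^*$ cannot be full to any row $r'' \in [r^* - d, r^* - 1]$, so column $c^*$ is never an argmax in this range, giving $K = 0$. For $d \ge 2$, every row $r'' \in [r^* - d + 1, r^* - 1]$ contains the non-rightmost cell $(r'', c^*)$ (otherwise triviality of the row would force column $c^*$ to be full to $r''$, contradicting $(r^* - d, c^*) \notin D$), so the rightmost cell of row $r''$ lies in some column $\tilde c > c^*$ that is full from $1$ to $r''$. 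Taking $r'' = r^* - 1$ yields $Y = r^* - 1$, and taking $r'' = r^* - d + 1$ yields a column $\tilde c > c^*$ with $|\blockers_{D, \tilde c}(r^* - d, c)| = r^* - d$, hence $Z = r^* - d$. Combined with $X + d \le r^* - 1$ and $X + 1 \le r^* - d$ (both from $(r^* - d, c^*) \notin D$), this gives $A = r^* - 1$ and $B = r^* - d$, so $A - B = d - 1$. The case $d = 1$ gives $Y = Z$ (as $r^* - 1 = r^* - d$) and hence $A = B$, so $A - B = 0 = d - 1$ in all cases; the decrease is $d + K + B - A = 1$, as claimed.

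Combining (i), (ii), and (iii) with Lemma~\ref{lem:poset} yields the theorem. The most delicate step is the structural analysis in (iii), particularly the identifications $Y = r^* - 1$ and $Z = r^* - d$ for $d \ge 2$, which rely crucially on the "lowest row" hypothesis; the case analysis in (ii) is routine once the correct auxiliary quantities $X, Y, Z, A, B, K$ are introduced.
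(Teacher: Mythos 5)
Your proof is correct, but it takes a genuinely different route from the paper. You run an amortized, potential-function argument on the quantity $\Phi(D)=\sum_{(r,c)\in D}\room_D(r,c)$ recomputed at each stage: every nontrivial move drops $\Phi$ by at least $1$, the move at the lowest nontrivial row drops it by exactly $1$, and $\Phi=0$ exactly on minimal diagrams, so the greedy lowest-row chain has length exactly $\Phi(D)$ while no chain can be longer. The paper instead fixes the initial diagram $D_0$, introduces standard labelings to track each original cell $(r,c)$ through an arbitrary Kohnert chain $\Gamma$, and bounds the per-cell move count $|\mathrm{R}_\Gamma(D_0,r,c)|$ by $\room_{D_0}(r,c)$ (using that the room of the tracked cell is weakly decreasing along the chain), then shows the same greedy chain attains equality cell by cell. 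Your approach buys a more elementary and self-contained argument — it needs no labeling/column-equivalence machinery and not even Lemma~\ref{lem:poset}, since the upper bound already follows from $\Phi\ge 0$ plus the decrease-by-at-least-one step; the paper's per-cell bookkeeping buys finer information, namely that in \emph{every} maximal chain the number of moves applied to the cell originating at $(r,c)$ is at most $\room_{D_0}(r,c)$, with all bounds met simultaneously by the greedy chain. Two small presentational points: in step (ii) the tie analysis can be streamlined, since $B\ge X+1$ and $B\ge Z$ give $A-B\le d-1$ immediately from $A=\max(X+d,Y)$ and $Y-Z\le d-1$; and in step (iii) the membership $(r'',c^*)\in D$ for $r''\in[r^*-d+1,r^*-1]$ is simply the definition of a Kohnert move landing in the \emph{first} empty position below, while your parenthetical argument (triviality of lower rows versus $(r^*-d,c^*)\notin D$) is what justifies that such a cell is not rightmost in its row; stated that way, the identifications $Y=r^*-1$ and $Z=r^*-d$ are airtight.
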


To prove Theorem~\ref{thm:max}, we employ \textbf{labelings}, which are maps $L: D \to \mathbb{Z}_{>0}$. A \textbf{tableau} is a pair $(D, L)$, where $L$ is a labeling of $D$. The \textbf{super-standard} labeling $L_0$ maps $(r, c) \mapsto r$. A labeling $L$ of a diagram $D$ is \textbf{strict} if for all $(r,c),(r',c)\in D$ with $r < r'$, we have $L(r, c) < L(r', c)$. Moreover, $L$ is \textbf{flagged} if $L(r, c) \geq r$ for all $(r, c) \in D$. Observe that the super-standard labeling is strict and flagged by definition. 

Two strict labelings $L, L'$ on diagrams $D, D'$, respectively, are \textbf{column-equivalent} if, for all $c > 0$, the restriction of $L$ to column $c$ has the same image as the restriction of $L'$ to column $c$. In other words, the tableaux $(D, L)$, $(D', L')$ have the same labels in the same columns. Column-equivalence requires that $D$ and $D'$ have identical column weights $\mathrm{cwt}(D)=\mathrm{cwt}(D')$. Given $D \preceq D_0$, the \textbf{standard labeling} of $D$ \textbf{relative to} $D_0$ is the unique strict labeling of $D$ that is column-equivalent to the super-standard labeling $L_0$ on $D_0$. Observe that this labeling is necessarily flagged and that the standard labeling of a diagram relative to itself is super-standard, justifying the usage of the word. Note that not every strict, flagged labeling of $D$ is standard relative to some $D_0$. For an example, see Figure \ref{fig:nonstandard}.

\begin{figure}[ht]
\begin{center}    
    \begin{tikzpicture}[scale=0.5]
    \celx(0,0)[2]
    \celx(1,1)[2]
    \celx(1,0)[1]
    \draw (0,2.5)--(0,0)--(2.5,0);
    \end{tikzpicture}
   \caption{A diagram with a strict, flagged labeling that is not standard relative to any other diagram.}\label{fig:nonstandard} 
  \end{center}
\end{figure}
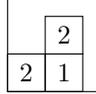

Now, we often wish to focus on a single cell $(r, c)$ of a diagram $D_0$. Therefore, for any $D \preceq D_0$ and fixed $(r, c) \in D_0$, we refer to the unique cell of $D$ in column $c$ with label $r$ (where the labeling is standard relative to $D_0$) as the \textbf{standard $r$-cell} in column $c$ of $D$. In addition, for $D_1\preceq D_0$ and a Kohnert chain $\Gamma_1 = \{D_1 \succ \cdots \succ D_n\}$ in $\mathcal{P}(D_0)$, we set
\[\mathrm{R}_{\Gamma_1}(D_0,r, c) = \{r_1 > r_2 > \cdots > r_k\} = \left\{r_i \in [r] \mid D_{j+1} = D_j\ldownarrow^{(r_i,c)}_{(r',c)} \text{ and } L(r_i, c) = r\right\};\] that is, $\mathrm{R}_{\Gamma_1}(D_0,r, c)$
is the set of rows of Kohnert moves applied which affect the positions of the standard $r$-cells in column $c$ of the diagrams of $\Gamma_1$. Note that for $\Gamma = \{D_0\succ D_1 \succ \cdots \succ D_n\}$, we have
\begin{equation}\label{eq:max}
    |\Gamma|-1 = \sum_{(r, c) \in D_0}|\mathrm{R}_{\Gamma}(D_0,r, c)|.
\end{equation}
In Propositions~\ref{prop:upperbound} and~\ref{prop:upperboundtight} below, for a diagram $D$, a cell $(r,c)\in D$, and a Kohnert chain $\Gamma$ in $\mathcal{P}(D)$ we consider the relationship between $|\mathrm{R}_{\Gamma}(D,r,c)|$ and $\mathrm{room}_D(r,c)$. Specifically, Proposition~\ref{prop:upperbound} shows that $|\mathrm{R}_{\Gamma}(D,r,c)|\le\mathrm{room}_D(r,c)$ in general, while Proposition~\ref{prop:upperboundtight} shows that this upper bound is met for all $(r,c)\in D$ by at least one Kohnert chain in $\mathcal{P}(D)$. Since every maximal chain in $\mathcal{P}(D)$ is a Kohnert chain, considering (\ref{eq:max}) along with Lemma~\ref{lem:poset}, this will suffice to prove Theorem~\ref{thm:max}.

For the proof of Proposition~\ref{prop:upperbound}, we require the following lemma. Within the proof of the lemma and the propositions that follow, for a diagram $D$ and cell $(r,c)\in D$, we say a cell $(\tilde{r}, \tilde{c}) \in D$ with $(\tilde{r}, \tilde{c}) \neq (r, c)$, $\tilde{r} \leq r$, and $\tilde{c} \geq c$ is in the \textbf{zone of attack} of $(r, c)$.

\begin{lemma}\label{lem:roomdecreasing}
    Let $D_0$ be a diagram with $(r, c) \in D_0$, and $D_1,D_2\in\mathrm{KD}(D_0)$ satisfy $D_2\preceq D_1$. If $(r_i, c)$ is the standard $r$-cell in column $c$ of $D_i$ for $i=1$ and $2$ where the labeling is standard relative to $D_0$, then
    \[\room_{D_2}(r_2, c) \leq \room_{D_1}(r_1, c).\]
\end{lemma}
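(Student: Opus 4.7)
The plan is to argue by induction on the length of a Kohnert chain from $D_1$ down to $D_2$; the base case $D_1=D_2$ is immediate, and the inductive step reduces everything to showing that a single nontrivial Kohnert move $D_1\to D_2=\mathcal{K}(D_1,s)$ weakly decreases $\room$. Before entering the case analysis, I would record one structural observation: since the standard labeling relative to $D_0$ is strict and column-equivalent to the super-standard labeling on $D_0$, the standard $r$-cell in column $c$ of any $D\preceq D_0$ is simply the $j$-th cell from the bottom of column $c$ of $D$, where $j:=|\blockers_{D_0,c}(r,c)|$ depends only on $D_0$. In particular, $|\blockers_{D_i,c}(r_i,c)|=j$ for both $i=1,2$, regardless of the intermediate diagrams. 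Write $M_i:=\max_{\tilde c\ge c}|\blockers_{D_i,\tilde c}(r_i,c)|$, so that $\room_{D_i}(r_i,c)=r_i-M_i$.

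Writing $(s,c^*)$ for the cell that moves and $(s',c^*)$ with $s'<s$ for its landing position, I would then split on where $c^*$ sits relative to $c$. If $c^*<c$, columns weakly right of $c$ are untouched, so $r_2=r_1$ and $\room$ is preserved. If $c^*>c$, then column $c$ is untouched, so $r_2=r_1$, and the only effect is in column $c^*$, where the number of cells at rows $\le r_1$ can only increase (by one, if $s>r_1\ge s'$, or stay the same otherwise); hence $M_2\ge M_1$ and $\room$ weakly decreases. If $c^*=c$ but the moved cell is not the standard $r$-cell, then either it lies strictly below row $r_1$ (in which case it stays below $r_1$ and nothing relevant changes), or it lies strictly above $r_1$; in the latter case one verifies directly that the new standard $r$-cell sits at some row $r_2\le r_1$ with $|\blockers_{D_2,c}(r_2,c)|=j$, and since columns $\tilde c>c$ are unaffected, a short comparison of cell counts at rows in $(r_2,r_1]$ gives $M_1-M_2\le r_1-r_2$, which is exactly the desired inequality.

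The main obstacle is the remaining case, $c^*=c$ with $s=r_1$, in which the standard $r$-cell itself is what moves. Here $s'<r_1$, and the Kohnert rule, that the cell lands in the topmost empty slot below $s$, forces rows $s'+1,\ldots,r_1-1$ of column $c$ in $D_1$ to be fully occupied; consequently the $j$-th cell from the bottom of column $c$ in $D_2$ is at row $r_1-1$, so $r_2=r_1-1$ and $|\blockers_{D_2,c}(r_2,c)|=j$. The key point is that, because $(r_1,c)$ was the rightmost cell in row $r_1$ of $D_1$, no column $\tilde c>c$ has a cell in row $r_1$; hence $|\blockers_{D_1,\tilde c}(r_1,c)|=|\blockers_{D_1,\tilde c}(r_1-1,c)|=|\blockers_{D_2,\tilde c}(r_2,c)|$ for all such $\tilde c$. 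Combined with the column-$c$ equality, this gives $M_1=M_2$; since $r_2=r_1-1$, we conclude $\room_{D_2}(r_2,c)=\room_{D_1}(r_1,c)-1<\room_{D_1}(r_1,c)$, completing the inductive step. Throughout, the subtlety to watch is that after the Kohnert move the standard $r$-cell may track a physically different cell than before, so identifying its new row via the ``$j$-th from the bottom'' description is what makes the bookkeeping tractable.
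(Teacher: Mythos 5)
Your proof is correct and follows essentially the same route as the paper's: reduce to a single Kohnert move and case on the position of the moved cell's column relative to $c$. Your handling of the $c^*=c$ subcase is organized slightly differently (via the ``$j$-th cell from the bottom'' characterization and the inequality $M_1-M_2\le r_1-r_2$ rather than the paper's direct observation that $r_2\in\{r_1,r_1-1\}$), but this is only a bookkeeping variation, and your sharper conclusion $M_1=M_2$ when the standard $r$-cell itself moves is exactly what the paper also uses downstream.
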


\begin{proof}
    It suffices to show that the result holds when $D_2$ can be obtained from $D_1$ by applying a single Kohnert move, i.e., $$D_2=\mathcal{K}(D_1,s)=D_1\ldownarrow^{(s, d)}_{(s', d)}.$$ If $d < c$, then $(r_1, c) = (r_2, c)$ and the cells have identical zones of attack in their respective diagrams; that is, $r_1=r_2$ and $$\max_{\tilde{c} \geq c} \{|\blockers_{D_2, \tilde{c}}(r_2, c)|\}=\max_{\tilde{c} \geq c} \{|\blockers_{D_1, \tilde{c}}(r_1, c)|\}.$$ Thus, $\room_{D_2}(r_2, c) = \room_{D_1}(r_1, c)$. If $d > c$, then again $(r_1 ,c) = (r_2, c)$, and the number of cells in a given column of the zone of attack of $(r_2,c)$ in $D_2$ is at least the number of cells in the same column of the zone of attack of $(r_1, c)$ in $D_1$; that is, $r_1=r_2$ and $$\max_{\tilde{c} \geq c} \{|\blockers_{D_2, \tilde{c}}(r_2, c)|\}\ge \max_{\tilde{c} \geq c} \{|\blockers_{D_1, \tilde{c}}(r_1, c)|\}.$$ Consequently, $\room_{D_2}(r_2, c) \leq \room_{D_1}(r_1, c)$. 

    Now suppose that $d = c$. If $r_1 = r_2$, then $(r_1, c)$ and $(r_2, c)$ have identical zones of attack in their corresponding diagrams. Thus, as in the case $d<c$, we have $\room_{D_2}(r_2, c) = \room_{D_1}(r_1, c)$. On the other hand, if $r_2 \neq r_1$, then $r_2 = r_1 - 1$. Moreover, $|\blockers_{D_2, c}(r_2, c)| = |\blockers_{D_1, c}(r_1, c)|$ and, for any $\tilde{c} > c$, we have $\blockers_{D_2, \tilde{c}}(r_2, c) = \blockers_{D_1, \tilde{c}}(r_1, c) \setminus \{(r_1, \tilde{c})\}$. Consequently, either $$\max_{\tilde{c} \geq c} \{|\blockers_{D_2, \tilde{c}}(r_2, c)|\}=\max_{\tilde{c} \geq c} \{|\blockers_{D_1, \tilde{c}}(r_1, c)|\}$$ or $$\max_{\tilde{c} \geq c} \{|\blockers_{D_2, \tilde{c}}(r_2, c)|\}=\max_{\tilde{c} \geq c} \{|\blockers_{D_1, \tilde{c}}(r_1, c)|\}-1.$$ In either case, since $r_2=r_1-1$, it follows that $\room_{D_2}(r_2, c)\le \room_{D_1}(r_1, c)$.
\end{proof}

\begin{prop}\label{prop:upperbound}
    If $D_0$ is a diagram, $(r, c) \in D_0$, and $\Gamma = \{D_0\succ D_1 \succ \cdots \succ D_n\}$ is a Kohnert chain of $\mathcal{P}(D_0)$, then $|\mathrm{R}_{\Gamma}(D_0,r, c)| \leq \room_{D_0}(r, c)$.
\end{prop}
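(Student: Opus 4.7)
The plan is to track the quantity $\rho_i := \room_{D_i}(r_i, c)$ along the Kohnert chain $\Gamma = \{D_0 \succ \cdots \succ D_n\}$, where $(r_i, c)$ denotes the standard $r$-cell in column $c$ of $D_i$ (with labeling standard relative to $D_0$). I would establish three facts: first, $\rho_i \geq 0$ for every $i$, which is immediate from the trivial bound $|\blockers_{D,\tilde c}(r, c)| \leq r$; second, $\rho_i$ is non-increasing in $i$, which is exactly Lemma~\ref{lem:roomdecreasing}; and third, whenever the move $D_i \to D_{i+1}$ is one counted by $\mathrm R_\Gamma(D_0, r, c)$ (that is, it moves the standard $r$-cell), the decrease is strict: $\rho_{i+1} \leq \rho_i - 1$. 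Granted these three facts, telescoping across the chain yields $|\mathrm R_\Gamma(D_0, r, c)| \leq \rho_0 - \rho_n \leq \rho_0 = \room_{D_0}(r, c)$, proving the proposition.

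The first two facts are essentially immediate. The technical heart of the argument, and the main obstacle, lies in verifying the strict-decrease claim. The key observation I would exploit is that whenever the standard $r$-cell at $(r_i, c)$ is moved by a Kohnert move, it is by definition the rightmost cell in row $r_i$ of $D_i$, which forces $(r_i, \tilde c) \notin D_i$ for every $\tilde c > c$. Using this, I would analyze the change of room column by column. For $\tilde c = c$, since the cell travels to the first empty position $r_{i+1}$ below $r_i$, every row in the interval $(r_{i+1}, r_i)$ is filled in column $c$ of $D_i$, producing the exact identity $r_{i+1} - |\blockers_{D_{i+1}, c}(r_{i+1}, c)| = r_i - |\blockers_{D_i, c}(r_i, c)| - 1$. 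For $\tilde c > c$, cells in column $\tilde c$ are unchanged from $D_i$ to $D_{i+1}$; combined with $(r_i, \tilde c) \notin D_i$, this means at most $r_i - r_{i+1} - 1$ of the $r_i - r_{i+1}$ rows in the interval $(r_{i+1}, r_i]$ contain a cell of column $\tilde c$, which yields $r_{i+1} - |\blockers_{D_{i+1}, \tilde c}(r_{i+1}, c)| \leq r_i - |\blockers_{D_i, \tilde c}(r_i, c)| - 1$. Taking the maximum over $\tilde c \geq c$ gives $\rho_{i+1} \leq \rho_i - 1$. The subtle point is that a naive analysis in the $\tilde c > c$ case would only yield a non-strict decrease; the rightmost-cell condition on the standard $r$-cell is precisely what forces the strict gap of $1$.
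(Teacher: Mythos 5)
Your overall architecture --- nonnegativity of $\room$, the non-strict decrease from Lemma~\ref{lem:roomdecreasing} for uncounted moves, a strict decrease for counted moves, then telescoping --- is exactly the paper's backward induction, and your identification of the rightmost-cell condition as the engine of the strict decrease is the right key observation. The gap is in the strict-decrease step, where you identify $r_{i+1}$, the row of the standard $r$-cell of $D_{i+1}$, with the physical destination of the moved cell (``the first empty position below $r_i$''). These coincide only when the position directly below $(r_i,c)$ is empty. In general the moved cell may drop several rows, past other cells of column $c$; since the standard labeling of $D_{i+1}$ is recomputed from scratch via column-equivalence and strictness (labels do not travel with cells), the label $r$ is then inherited by the cell at row $r_i-1$, which did not move. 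So $r_{i+1}=r_i-1$, not the physical destination $r'$.

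Your per-column computation, carried out at $r'$, correctly shows $\room_{D_{i+1}}(r',c)\le\room_{D_i}(r_i,c)-1$, but this bounds the wrong quantity: the telescoping needs a bound on $\rho_{i+1}=\room_{D_{i+1}}(r_i-1,c)$, which can be strictly larger than $\room_{D_{i+1}}(r',c)$. For instance, take column $1$ with cells in rows $3$ and $4$ and column $2$ with cells in rows $1$ and $2$; the Kohnert move at row $4$ sends $(4,1)$ to $(2,1)$, and one checks $\room_{D_1}(2,1)=0$ while the standard $4$-cell of $D_1$ sits at $(3,1)$ with $\room_{D_1}(3,1)=1$. The inequality you need is nonetheless true --- in fact $\rho_{i+1}=\rho_i-1$ exactly --- but to obtain it you must redo the count at row $r_i-1$, where the argument is even simpler: the rightmost-cell condition gives $|\blockers_{D_{i+1},\tilde c}(r_i-1,c)|=|\blockers_{D_i,\tilde c}(r_i,c)|$ for every $\tilde c\ge c$, whence $\room_{D_{i+1}}(r_i-1,c)=\room_{D_i}(r_i,c)-1$. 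With that correction (which is precisely the paper's argument) your proof goes through.
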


\begin{proof}
    Let $(r_j,c)$ be the standard $r$-cell in column $c$ of $D_j$ where the labeling is standard relative to $D_0$, and $\Gamma_j=\{D_j\succ\cdots\succ D_n\}$ for $0\le j\le n$. Note that $\Gamma_0=\Gamma$ and $\Gamma_j$ is a Kohnert chain for $0\le j\le n$. We show that $|\mathrm{R}_{\Gamma_j}(D_0,r, c)| \leq \room_{D_j}(r_j, c)$ for $0\le j\le n$. Our result then follows as the special case $j=0$.

    To start, note that $0=|\mathrm{R}_{\Gamma_n}(D_0,r, c)| \leq \room_{D_n}(r_n, c)$ is immediate as $\room_{D_n}(r_n, c)\ge 0$. Now, assume that $|\mathrm{R}_{\Gamma_j}(D_0,r, c)| \leq \room_{D_j}(r_j, c)$ when $m<j\le n$ for some $m$ satisfying $0\le m<n$. We show that $|\mathrm{R}_{\Gamma_m}(D_0,r, c)| \leq \room_{D_m}(r_m, c)$. If $$D_{m+1}=\mathcal{K}(D_m,\widehat{r})=D_{m}\ldownarrow^{(\widehat{r},c')}_{(r',c')}$$ and $(\widehat{r},c')\neq (r_{m},c)$, then $|R_{\Gamma_{m}}(D_0,r,c)|=|R_{\Gamma_{m+1}}(D_0,r,c)|\le \room_{D_{m+1}}(r_{m+1}, c)$. Applying Lemma~\ref{lem:roomdecreasing}, it follows that $|R_{\Gamma_{m}}(D_0,r,c)|\le \room_{D_{m+1}}(r_{m+1}, c)\le \room_{D_{m}}(r_{m}, c)$, as desired. So, assume that $$D_{m+1}=\mathcal{K}(D_m,r_m)=D_{m}\ldownarrow^{(r_m,c)}_{(r',c)}.$$ Observe that $|R_{\Gamma_{m}}(D_0,r,c)|=|R_{\Gamma_{m+1}}(D_0,r,c)|+1$ and the standard $r$-cell in column $c$ of $D_{m+1}$ is in position $(r_{m+1},c)=(r_m - 1, c)$. Additionally, since $(r_m,c)$ is the rightmost cell in row $r_m$ of $D_m$, any $(r'', c') \in D_m$ in the zone of attack of $(r_m,c)$ has $r'' < r_m$. Thus, 
    \[\room_{D_{m+1}}(r_{m+1}, c)=\room_{D_{m+1}}(r_m - 1, c) = \room_{D_m}(r_m, c) - 1.\] Consequently, \[|\mathrm{R}_{\Gamma_m}(D_0,r, c)| = |\mathrm{R}_{\Gamma_{m+1}}(D_0,r, c)| + 1 \leq \room_{D_{m+1}}(r_{m+1},c) + 1 = \room_{D_m}(r_m, c),\]
    as desired. Therefore, $|\mathrm{R}_{\Gamma_m}(D_0,r, c)| \leq \room_{D_m}(r_m, c)$ and we may conclude that $|\mathrm{R}_{\Gamma_j}(D_0,r, c)| \leq \room_{D_j}(r_j, c)$ for $0\le j\le n$. As noted above, the result follows.
\end{proof}

Let $r_D$ denote the minimal row of a diagram $D$ to which a nontrivial Kohnert move can be applied, or 0 otherwise. Then, starting with a diagram $D_0$, we recursively define
\[D_{i+1} = \begin{cases}
    \mathcal{K}(D_i, r_{D_i}), & r_{D_i} > 0\\
    D_i, & \text{else.}
\end{cases}\]
Observe that, eventually, $D_{i+1} = D_i$. Let $N$ denote the smallest such index. Then $D_N \in \mathrm{Min}(D_0)$ and we say $\Gamma(D_0) = \{D_0 \succ D_1 \succ \cdots \succ D_N\}$ is the Kohnert chain generated by this operation.

\begin{prop}\label{prop:upperboundtight}
    Given a diagram $D_0$ and a cell $(r, c) \in D_0$, we have $|\mathrm{R}_{\Gamma(D_0)}(D_0,r, c)| = \room_{D_0}(r, c)$.
\end{prop}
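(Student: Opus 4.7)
The plan is to track the quantity $g_m := \room_{D_m}(r_m, c)$ along the greedy chain $\Gamma(D_0) = \{D_0 \succ D_1 \succ \cdots \succ D_N\}$, where $r_m$ denotes the row of the standard $r$-cell in column $c$ of $D_m$, and to show that every greedy step either decreases $g_m$ by exactly $1$ (precisely when the standard $r$-cell is physically moved) or leaves it unchanged. Since $g_0 = \room_{D_0}(r, c)$ and since $|\mathrm{R}_{\Gamma(D_0)}(D_0, r, c)|$ counts exactly those steps at which the standard $r$-cell is physically moved, this yields the telescoping identity $|\mathrm{R}_{\Gamma(D_0)}(D_0, r, c)| = g_0 - g_N$. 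The proof is then completed by showing $g_N = 0$.

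To see $g_N = 0$, I would argue that in the minimal diagram $D_N$, for every nonempty row $\tilde{r}$ the rightmost cell $(\tilde{r}, c^\ast)$ has no nontrivial Kohnert move, which forces column $c^\ast$ to be completely filled from row $1$ to row $\tilde{r}$. Hence for any cell $(\tilde{r}, \tilde{c}) \in D_N$ we have $c^\ast \geq \tilde{c}$ and $|\blockers_{D_N, c^\ast}(\tilde{r}, \tilde{c})| = \tilde{r}$, which gives $\room_{D_N}(\tilde{r}, \tilde{c}) = 0$. Applying this to the standard $r$-cell in column $c$ of $D_N$ yields $g_N = 0$.

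For the per-step analysis, the critical use of the greedy rule is as follows. Suppose the Kohnert move at step $m+1$ is applied at row $s$ in column $d$. If $s > r_m$, then no nontrivial move is available at row $r_m$, so (by the same filled-column argument used for $g_N = 0$) we already have $g_m = 0$; then Lemma~\ref{lem:roomdecreasing} forces $g_{m+1} \leq g_m = 0$ and hence $g_m - g_{m+1} = 0$. If $s \leq r_m$, a case analysis on $d$ shows that $g_m$ strictly decreases only when $s = r_m$, $d = c$, and $(r_m, c)$ is the rightmost cell of row $r_m$, i.e., when the standard $r$-cell itself is physically moved; in that case the argument from the second half of the proof of Proposition~\ref{prop:upperbound} applies verbatim to show the decrease equals exactly $1$.

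The main obstacle is the per-step case analysis. One must verify that moves at rows $s < r_m$ (in any column), moves at row $s = r_m$ in a column $c_m > c$ (when the rightmost cell of row $r_m$ lies strictly to the right of $c$), and the virtual label shifts in column $c$ that occur when $s > r_m$ with $g_m = 0$ all leave $g_m$ fixed. Each of these sub-cases reduces to careful tracking of blocker counts in the columns weakly right of $c$ at rows weakly below the (possibly shifted) position of the standard $r$-cell, exactly in the style of the proof of Lemma~\ref{lem:roomdecreasing}; in each instance, any change in the row of the standard $r$-cell is compensated by a matching change in blocker counts, so that $g_m$ is preserved.
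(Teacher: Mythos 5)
Your proposal is correct and takes essentially the same route as the paper: the paper's backward induction on the tail chains $\Gamma_j$, establishing $|\mathrm{R}_{\Gamma_j}(D_0,r,c)|=\room_{D_j}(r_j,c)$ for all $j$, is exactly your telescoping of $g_m$ combined with the base case $g_N=0$. Your per-step case analysis, including the key use of the greedy rule together with Lemma~\ref{lem:roomdecreasing} to show $g_m=g_{m+1}=0$ when the move occurs at a row above $r_m$, and the exact decrement of $1$ when the standard $r$-cell itself moves, matches the paper's Cases 1 and 2.
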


\begin{proof}
Let $\Gamma(D_0) = \{D_0 \succ D_1 \succ \cdots \succ D_N\}$, $(r_j,c)$ be the standard $r$-cell in column $c$ of $D_j$ where the labeling is standard relative to $D_0$, and $\Gamma_j=\{D_j\succ\cdots\succ D_n\}$ for $0\le j\le N$. Note that $\Gamma_0=\Gamma(D_0)$. We show that $|\mathrm{R}_{\Gamma_j}(D_0,r, c)| = \room_{D_j}(r_j, c)$ for $0\le j\le N$. Our result then follows as the special case $j=0$.

To start, note that $0=|R_{\Gamma_N}(D_0,r,c)|$. Moreover, by the definition of $\Gamma(D_0)$, we must have that $\mathcal{K}(D_N,r_N)=D_N$. Consequently, since $(r_N,c)\in D_N$, there exists $\tilde{c}\ge c$ such that $(\tilde{c},\tilde{r})\in D_N$ for all $1\le \tilde{r}\le r_N$, i.e., $\room_{D_N}(r_N, c)=0=|R_{\Gamma_N}(D_0,r,c)|$, as desired. Now, assume that $|\mathrm{R}_{\Gamma_j}(D_0,r, c)| = \room_{D_j}(r_j, c)$ when $m<j\le N$ for some $m$ satisfying $0\le m<N$. We show that $|\mathrm{R}_{\Gamma_m}(D_0,r, c)| = \room_{D_m}(r_m, c)$. There are two cases.
\bigskip

\noindent
\textbf{Case 1:} $D_{m+1}=\mathcal{K}(D_m,\widehat{r})$ with $\widehat{r}\neq r_m$. Note that in this case we have $|\mathrm{R}_{\Gamma_m}(D_0,r, c)|=|\mathrm{R}_{\Gamma_{m+1}}(D_0,r, c)|$. Moreover, we claim that $\room_{D_m}(r_m, c)=\room_{D_{m+1}}(r_m+1, c)$ as well, from which the result follows since $|\mathrm{R}_{\Gamma_{m+1}}(D_0,r, c)|=\room_{D_{m+1}}(r_m+1, c)$ by assumption. To see this, first note that if $r_m<\widehat{r}$, then $\room_{D_m}(r_m, c)=\room_{D_{m+1}}(r_m+1, c)$ follows immediately since $r_m=r_{m+1}$ and the number of cells in the zones of attack of $(r_m,c)=(r_{m+1},c)$ in $D_m$ and $D_{m+1}$ remain the same. On the other hand, if $\widehat{r}>r_m$, then, by the definition of $\Gamma(D_0)$, we must have $D_m=\mathcal{K}(D_m,r_m)$. Thus, since $(r_m,c)\in D_m$, it follows that there exists $\tilde{c}\ge c$ for which $(\tilde{r},\tilde{c})\in D_m$ for all $1\le \tilde{r}\le r_m$. Consequently, $\room_{D_m}(r_m, c)=0$. Applying Lemma~\ref{lem:roomdecreasing}, it then follows that $\room_{D_{m+1}}(r_{m+1}, c)\le \room_{D_m}(r_m, c)=0$, i.e., $\room_{D_{m+1}}(r_{m+1}, c)=0=\room_{D_m}(r_m, c)$, as desired.
\bigskip

\noindent
\textbf{Case 2:} $D_{m+1}=\mathcal{K}(D_m,r_m)$. Note that in this case, since $(r_m,c)\in D_m$, we have $$D_{m+1}=D_m\ldownarrow^{(r_m,c')}_{(r',c')}$$ with $c'\ge c$. If $c'=c$, then $|\mathrm{R}_{\Gamma_m}(D_0,r, c)|=|\mathrm{R}_{\Gamma_{m+1}}(D_0,r, c)|+1$ and $r_{m+1}=r_m-1$. Additionally, since $(r_m,c)$ is the rightmost cell in row $r_m$ of $D_m$, any $(r'', c') \in D_m$ in the zone of attack of $(r_m,c)$ has $r'' < r_m$. Consequently, $$\room_{D_m}(r_m, c)=\room_{D_{m+1}}(r_{m+1}, c)+1=|\mathrm{R}_{\Gamma_{m+1}}(D_0,r, c)|+1=|\mathrm{R}_{\Gamma_m}(D_0,r, c)|,$$ where we have used the assumption that $\room_{D_{m+1}}(r_{m+1}, c)=|\mathrm{R}_{\Gamma_{m+1}}(D_0,r, c)|$. On the other hand, if $c'>c$, then $|\mathrm{R}_{\Gamma_m}(D_0,r, c)|=|\mathrm{R}_{\Gamma_{m+1}}(D_0,r, c)|$, $r_m=r_{m+1}$, and the number of cells in the zones of attack of $(r_m,c)=(r_{m+1},c)$ in $D_m$ and $D_{m+1}$ are the same, i.e., $\room_{D_m}(r_m, c)=\room_{D_{m+1}}(r_{m+1}, c)$. Consequently, by assumption, $$\room_{D_m}(r_m, c)=\room_{D_{m+1}}(r_{m+1}, c)=|\mathrm{R}_{\Gamma_{m+1}}(D_0,r, c)|=|\mathrm{R}_{\Gamma_m}(D_0,r, c)|,$$ as desired.
\bigskip

\noindent
Therefore, $|\mathrm{R}_{\Gamma_m}(D_0,r, c)| = \room_{D_m}(r_m, c)$ and we may conclude that $|\mathrm{R}_{\Gamma_j}(D_0,r, c)| = \room_{D_j}(r_j, c)$ for $0\le j\le N$. As noted above, the result follows.
\end{proof}

We are now ready to prove Theorem \ref{thm:max}.
\begin{proof}[Proof of Theorem \ref{thm:max}]
    As noted in (\ref{eq:max}), for any maximal chain $\Gamma \subseteq \mathcal{P}(D_0)$, 
    \[|\Gamma|-1 = \sum_{(r, c) \in D_0}|\mathrm{R}_{\Gamma}(D_0,r, c)|.\]
    Thus, by Proposition~\ref{prop:upperbound},
    \[|\Gamma|-1 \leq \sum_{(r, c) \in D_0} \room_{D_0}(r, c).\]
    However, Proposition~\ref{prop:upperboundtight} has
    \[|\Gamma(D_0)|-1 = \sum_{(r, c) \in D_0} \room_{D_0}(r, c).\]
    Consequently, $\Gamma(D_0)$ is a longest chain in $\mathcal{P}(D_0)$ and it has the desired length. Applying Lemma~\ref{lem:poset}, the result follows.
\end{proof}

\begin{example}
    Let $D$ be the diagram illustrated in Figure~\ref{fig:MCex} $(a)$. To compute $\mathrm{MC}(D)$, in Figure~\ref{fig:MCex} $(b)$ we include row labels to the right of each row and label each cell $(r,c)\in D$ by $\max_{\tilde{c} \geq c} \{|\blockers_{D, \tilde{c}}(r, c)|\}$. Then, by Theorem~\ref{thm:max}, for each labeled cell in Figure~\ref{fig:MCex} $(b)$, we subtract the label of each cell from the row that the cell occupies and sum up the resulting values. Here, we get 
    $$\mathrm{MC}(D)=3(2-1)+(7-4)+3(5-3)+2(4-2)+3(3-1)+(7-3)+3(6-2)+(4-1)=41.$$
    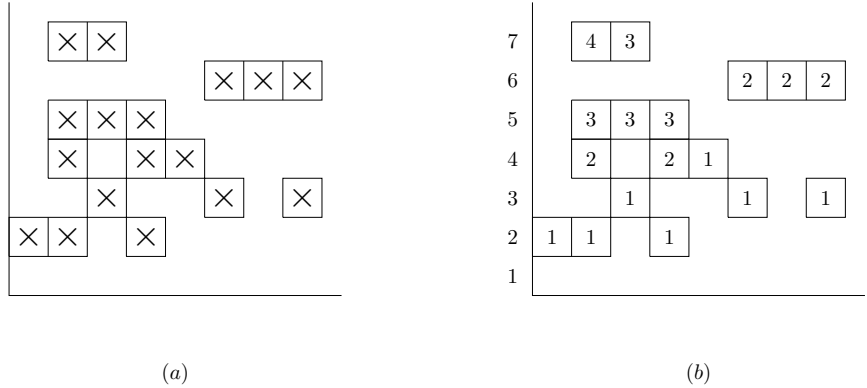
\begin{figure}[H]
        \centering
        $$\scalebox{0.8}{\begin{tikzpicture}[scale=0.65]
    \node at (0.5, 1.5) {$\bigtimes$};
        \node at (1.5, 1.5) {$\bigtimes$};
        \node at (2.5, 2.5) {$\bigtimes$};
        \node at (5.5, 2.5) {$\bigtimes$};
        \node at (1.5, 3.5) {$\bigtimes$};
        \node at (3.5, 3.5) {$\bigtimes$};
        \node at (1.5, 4.5) {$\bigtimes$};
        \node at (2.5, 4.5) {$\bigtimes$};
        \node at (1.5, 6.5) {$\bigtimes$};
        \node at (5.5, 5.5) {$\bigtimes$};
        \node at (6.5, 5.5) {$\bigtimes$};

        \node at (7.5, 2.5) {$\bigtimes$};
        \node at (7.5, 5.5) {$\bigtimes$};

        \node at (4.5, 3.5) {$\bigtimes$};

        \node at (3.5, 4.5) {$\bigtimes$};

        \node at (3.5, 1.5) {$\bigtimes$};

        \node at (2.5, 6.5) {$\bigtimes$};

        \draw (0,7.5)--(0,0)--(8.5,0);
        
        \draw (0,1)--(2,1)--(2,2)--(0,2)--(0,1);
        \draw (1,1)--(1,2);
        \draw (3,1)--(4,1)--(4,2)--(3,2)--(3,1);
        
        \draw (2,2)--(3,2)--(3,3)--(2,3)--(2,2);
        \draw (5,2)--(6,2)--(6,3)--(5,3)--(5,2);
        \draw (7,2)--(8,2)--(8,3)--(7,3)--(7,2);

        \draw (1,3)--(2,3)--(2,4)--(1,4)--(1,3);

        \draw (3,3)--(5,3)--(5,4)--(3,4)--(3,3);
        \draw (4,3)--(4,4);

        \draw (1,4)--(4,4)--(4,5)--(1,5);
        \draw (1,4)--(1,5);
        \draw (2,4)--(2,5);
        \draw (3,4)--(3,5);
        \draw (5,5)--(8,5)--(8,6)--(5,6)--(5,5);
        \draw (6,5)--(6,6);
        \draw (7,5)--(7,6);
        \draw (1,6)--(3,6)--(3,7)--(1,7)--(1,6);
        \draw (2,6)--(2,7);
        \node at (4.25, -2) {$(a)$};
\end{tikzpicture}}\quad\quad\quad\quad\quad\quad \scalebox{0.8}{\begin{tikzpicture}[scale=0.65]
    \node at (0.5, 1.5) {$1$};
        \node at (1.5, 1.5) {$1$};
        \node at (2.5, 2.5) {$1$};
        \node at (5.5, 2.5) {$1$};
        \node at (1.5, 3.5) {$2$};
        \node at (3.5, 3.5) {$2$};
        \node at (1.5, 4.5) {$3$};
        \node at (2.5, 4.5) {$3$};
        \node at (1.5, 6.5) {$4$};
        \node at (5.5, 5.5) {$2$};
        \node at (6.5, 5.5) {$2$};
        
        \node at (-0.5, 0.5) {$1$};
        \node at (-0.5, 1.5) {$2$};
        \node at (-0.5, 2.5) {$3$};
        \node at (-0.5, 3.5) {$4$};
        \node at (-0.5, 4.5) {$5$};
        \node at (-0.5, 5.5) {$6$};
        \node at (-0.5, 6.5) {$7$};

        \node at (7.5, 2.5) {$1$};
        \node at (7.5, 5.5) {$2$};

        \node at (4.5, 3.5) {$1$};

        \node at (3.5, 4.5) {$3$};

        \node at (3.5, 1.5) {$1$};

        \node at (2.5, 6.5) {$3$};

        \draw (0,7.5)--(0,0)--(8.5,0);
        
        \draw (0,1)--(2,1)--(2,2)--(0,2)--(0,1);
        \draw (1,1)--(1,2);
        \draw (3,1)--(4,1)--(4,2)--(3,2)--(3,1);
        
        \draw (2,2)--(3,2)--(3,3)--(2,3)--(2,2);
        \draw (5,2)--(6,2)--(6,3)--(5,3)--(5,2);
        \draw (7,2)--(8,2)--(8,3)--(7,3)--(7,2);

        \draw (1,3)--(2,3)--(2,4)--(1,4)--(1,3);

        \draw (3,3)--(5,3)--(5,4)--(3,4)--(3,3);
        \draw (4,3)--(4,4);

        \draw (1,4)--(4,4)--(4,5)--(1,5);
        \draw (1,4)--(1,5);
        \draw (2,4)--(2,5);
        \draw (3,4)--(3,5);
        \draw (5,5)--(8,5)--(8,6)--(5,6)--(5,5);
        \draw (6,5)--(6,6);
        \draw (7,5)--(7,6);
        \draw (1,6)--(3,6)--(3,7)--(1,7)--(1,6);
        \draw (2,6)--(2,7);
        
        \node at (4.25, -2) {$(b)$};
\end{tikzpicture}}$$
        \caption{Example computing $\mathrm{MC}(D)$}
        \label{fig:MCex}
    \end{figure}
\end{example}

Considering the work above, we are led to the following algorithm for solving the maximum Kohnert move puzzle.

\begin{tcolorbox}[breakable, enhanced]
\centerline{\textbf{\underline{Solution to Maximum Kohnert Move Puzzle}}}
\bigskip

Given a diagram $D_1=D$ proceed as follows:
\begin{itemize}
    \item[] Step $i$: If $\mathcal{K}(D_i,r)=D_0$ for all $r>0$, then $D_i$ is minimal and the puzzle has been solved. Otherwise, letting $r_{D_i}$ be minimal such that  $\mathcal{K}(D_i,r_{D_i})\neq D_i$, set $D_{i+1}=\mathcal{K}(D_i,r_{D_i})$ and go to Step $i+1$.
\end{itemize}
Note that to solve the puzzle we apply Kohnert moves to lower the bottom-most cell whose position can be affected by a Kohnert move at each step.
\end{tcolorbox}

\section{Minimum Number of Moves}\label{sec:min}

In this section, we consider the problem of computing $\mathrm{mc}(D)$. To state the main result of this section, we require the following notation. Suppose that $D$ is a diagram with $\mathrm{cwt}(D)=(m_1,\hdots,m_n)$. For each nonempty column $c$ of $D$, letting 
\begin{itemize}
    \item $M=\max\{m_i~|~c<i\le n\}$, i.e., $M$ is the maximum number of cells contained in a column lying strictly to the right of column $c$ in $D$, and
    \item $r$ denote the row occupied by the topmost cell in column $c$ of $D$,
\end{itemize}
define $$h(D,c)=\begin{cases}
    r, & M>r \\
    m_c, & m_c\ge M \\
    M, & r>M> m_c.
\end{cases}.$$ We will show in Proposition~\ref{prop:highestcellmin}, that $$h(D,c)=\max\{r~|~(r,c)\in\tilde{D}\in\mathrm{Min}(D)\},$$ i.e., $h(D,c)$ corresponds to the highest possible row occupied by the topmost cell in column $c$ of a diagram belonging to $\textrm{Min}(D)$. With the notation above, we now state the main result of this section.

\begin{theorem}\label{thm:minmain}
    Let $D$ be a diagram with nonempty columns $c_1<c_2<\cdots<c_n$. For $i\in [n]$, suppose that $(r_i,c_i)$ is the topmost cell in column $c_i$ of $D$. Then $$\mathrm{mc}(D)=\sum_{i=1}^n[r_i-h(D,c_i)].$$
\end{theorem}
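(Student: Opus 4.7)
The proof is a matching upper/lower bound, both organized around the topmost occupied row $t_c(\tilde D)$ in each column $c$ of a diagram $\tilde D$. The key lemma is that a single nontrivial Kohnert move decreases $\sum_c t_c$ by at most one. Indeed, any Kohnert move acts within one column $c$, so every other column's topmost row is untouched; within $c$, either the moved cell $(r,c)$ is not the topmost (and $t_c$ is unchanged) or it is, in which case a direct check shows $t_c$ drops by exactly one --- either $(r-1,c)$ is empty and the cell lands there, or $(r-1,c)$ is occupied, the cell falls lower, but $(r-1,c)$ then becomes the new topmost. Given this lemma, the lower bound is immediate: for any $\tilde D \in \mathrm{Min}(D)$ and $(s_1,\ldots,s_N) \in \mathrm{KSeq}(D,\tilde D)$ we have $N \ge \sum_i [r_i - t_{c_i}(\tilde D)]$, and since Proposition~\ref{prop:highestcellmin} gives $t_{c_i}(\tilde D) \le h(D,c_i)$, minimizing yields $\mathrm{mc}(D) \ge \sum_i [r_i - h(D,c_i)]$.

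For the upper bound, I would exhibit an explicit sequence of $\sum_i [r_i - h(D,c_i)]$ nontrivial Kohnert moves. The procedure processes columns right-to-left: for $j=n,n-1,\ldots,1$ in turn, while the topmost row of $c_j$ in the current diagram exceeds $h(D,c_j)$, apply a Kohnert move at that topmost row. Because Kohnert moves act within a single column, columns strictly left of $c_j$ remain undisturbed during its processing phase, so $c_j$ begins with topmost $r_j$; by the key lemma, each successful move on $c_j$ drops its topmost by exactly one. Hence provided every intended move is nontrivial and acts on $c_j$, the phase uses exactly $r_j - h(D,c_j)$ moves, summing to the target.

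The main obstacle is this nontriviality check. At each intermediate row $r' \in (h(D,c_j), r_j]$ acted on while processing $c_j$, we need $(r',c_j)$ to be the rightmost cell in its row, i.e., no column $c_k$ with $k>j$ should occupy row $r'$ at that moment. This reduces to the following monotonicity claim, which I would prove by case analysis on $h(D,c_j)$ and $h(D,c_k)$ using the evident bounds $M_k \le M_j$ and $m_{c_k} \le M_j$ for $k>j$: whenever $r_j > h(D,c_j)$, one has $h(D,c_k) \le h(D,c_j)$ for every $k>j$. (The hypothesis $r_j > h(D,c_j)$ rules out the case $h(D,c_j)=r_j$, leaving $h(D,c_j)$ equal to $m_{c_j}$ or $M_j$, each of which dominates every option for $h(D,c_k)$.) Once the monotonicity is in hand, after processing $c_{j+1},\ldots,c_n$ each $c_k$ to the right of $c_j$ has topmost at most $h(D,c_k) \le h(D,c_j) < r'$, so no cell of $c_k$ occupies row $r'$, the cell $(r',c_j)$ is rightmost, the move is nontrivial on $c_j$, and the construction delivers the desired total.
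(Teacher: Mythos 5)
Your overall strategy is sound and is essentially the paper's, recast in an equivalent monovariant: since Kohnert moves preserve column weights, your quantity $\sum_c t_c(\tilde D)$ differs from $|\mathrm{empty}(\tilde D)|$ by the constant $|D|$, so your key lemma (each nontrivial move drops $\sum_c t_c$ by at most one, and by exactly one when the moved cell is topmost in its column) is precisely the paper's Lemma~\ref{lem:empty}, and your lower bound via Proposition~\ref{prop:highestcellmin} matches the paper's. Your monotonicity claim, that $r_j>h(D,c_j)$ forces $h(D,c_k)\le h(D,c_j)$ for all $k>j$, is correct (the hypothesis kills the case $h(D,c_j)=r_j$, and in the remaining cases $h(D,c_j)\ge M_j\ge\max\{m_{c_k},M_k\}\ge h(D,c_k)$), and it does guarantee that every move in your right-to-left procedure is nontrivial and acts on the intended column, so the procedure performs exactly $\sum_i[r_i-h(D,c_i)]$ nontrivial moves.

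There is, however, a genuine gap in the upper bound: you never verify that the diagram your procedure terminates at lies in $\mathrm{Min}(D)$. Since $\mathrm{mc}(D)$ is a minimum over Kohnert sequences that \emph{end at a minimal diagram}, exhibiting a sequence of the right length proves nothing unless its endpoint is minimal; if the terminal diagram admitted a further nontrivial move, your construction would yield no upper bound at all. Minimality is not automatic here: in a column $c_j$ with $r_j>M_j>m_{c_j}$ your procedure leaves the topmost cell at row $M_j$ with empty positions still below it in that column (for instance, original cells in rows $1$ and $5$ with $h(D,c_j)=4$ end, after one move, as cells in rows $1$ and $4$ with rows $2,3$ empty), and one must argue that such a cell is never the rightmost cell of its row in the final diagram --- i.e., that it is shielded by a column of height $M_j$ to its right whose cells have been bottom-justified. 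This is exactly the content of Lemma~\ref{lem:minprophelp2}$(ii)$ and the property $(\ast)$ in its proof, which the paper establishes by a separate induction over the columns processed; it is the most technical step of the argument and your proposal needs an analogous verification (or an appeal to that lemma) to close.
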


To establish Theorem~\ref{thm:minmain}, we first show that $\mathrm{mc}(D)$ is the difference of the number of empty positions in $D$ and the maximum number of empty positions in a diagram of $\mathrm{Min}(D)$. In particular, for a diagram $D$, we show that 
\begin{equation*}
    \mathrm{mc}(D)=|\mathrm{empty}(D)|-\max_{\widehat{D}\in\mathrm{Min}(D)}|\mathrm{empty}(\widehat{D})|.
\end{equation*}
in Theorem~\ref{thm:min}. To do so, we start by forming a better understanding of the value $$\max_{\widehat{D}\in\mathrm{Min}(D)}|\mathrm{empty}(\widehat{D})|.$$ Note that for a diagram $D$ with nonempty column $c$, the number of empty positions in column $c$ of $D$ is given by the row occupied by the topmost cell in column $c$ of $D$ minus the total number of cells in column $c$ of $D$. Thus, to understand $\max_{\widehat{D}\in\mathrm{Min}(D)}|\mathrm{empty}(\widehat{D})|$, it suffices to understand the maximum rows occupied by cells in nonempty columns of minimal diagrams associated with $D$.

\begin{lemma}\label{lem:minprophelp}
    Let $D$ be a diagram with $\mathrm{cwt}(D)=(m_1,\hdots,m_n)$. Suppose that $M_j=\max\{m_i~|~j\le i\le n\}$ for $1\le j\le n$. Then for all $\tilde{D}\in\mathrm{Min}(D)$ and $1\le j\le n$,
    \begin{enumerate}
        \item[$(i)$] the cells in column $c$ of $\tilde{D}$ for $j\le c\le n$ occupy rows weakly below row $M_j$ and
        \item[$(ii)$] for each $1 \le \tilde{r} \le M_j$, there exists at least one column $c^*$ for which $j\le c^*\le n$ and $(\tilde{r},c^*)\in \tilde{D}$.
    \end{enumerate}
\end{lemma}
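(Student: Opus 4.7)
The plan is to prove (i) by descending induction on $j$, leveraging the fact that Kohnert moves preserve column weights, so $\mathrm{cwt}(\tilde{D}) = \mathrm{cwt}(D) = (m_1, \ldots, m_n)$ for every $\tilde{D} \in \mathrm{KD}(D)$. The minimality hypothesis will enter through a single observation: if $(r, c)$ is the rightmost cell in row $r$ of a minimal diagram $\tilde{D}$, then applying a Kohnert move at row $r$ must be trivial, which forces every position $(1, c), (2, c), \ldots, (r-1, c)$ to be a cell of $\tilde{D}$, and hence $m_c \ge r$.

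For part (i), I would start with the base case $j = n$. Since column $n$ is the rightmost nonempty column, the topmost cell $(r, n)$ of column $n$ is automatically the rightmost cell in row $r$, so the minimality observation gives $r \le m_n = M_n$, and all other cells of column $n$ sit weakly below $(r,n)$. For the inductive step, assume the claim holds for $j+1$, so every cell of $\tilde{D}$ in columns $j+1, \ldots, n$ is in a row $\le M_{j+1}$. Consider the topmost cell $(r, j)$ of column $j$. Either it is the rightmost cell in row $r$, in which case $r \le m_j \le M_j$ by the minimality observation, or row $r$ contains some cell $(r, c')$ with $c' > j$, in which case $r \le M_{j+1} \le M_j$ by the inductive hypothesis. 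Either way the topmost cell of column $j$ lies in a row $\le M_j$, so all cells of column $j$ do, and combining with the inductive hypothesis yields (i) for all cells in columns $j, \ldots, n$.

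For part (ii), I would deduce it immediately from (i). By the definition of $M_j$, there exists $c^* \in \{j, \ldots, n\}$ with $m_{c^*} = M_j$, and column-weight preservation tells us that column $c^*$ of $\tilde{D}$ contains exactly $M_j$ cells. By (i), each of these cells lies in some row in $\{1, 2, \ldots, M_j\}$. Since distinct cells in the same column occupy distinct rows, the $M_j$ cells of column $c^*$ must occupy each row in $\{1, 2, \ldots, M_j\}$ exactly once; hence every such $\tilde{r}$ satisfies $(\tilde{r}, c^*) \in \tilde{D}$, which is precisely (ii).

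The main obstacle will be cleanly articulating the minimality observation at the outset: one must verify that for a rightmost cell $(r, c)$ in a minimal diagram, \emph{every} position $(r', c)$ with $r' < r$ is filled, which rules out the existence of an empty position that a Kohnert move at row $r$ could exploit. Once this observation is in place, the induction for (i) and the pigeonhole argument for (ii) both run without difficulty.
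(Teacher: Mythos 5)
Your proposal is correct and follows essentially the same approach as the paper: both hinge on the observation that in a minimal diagram a cell that is rightmost in its row must have every position below it in its column filled (plus preservation of column weights), and both deduce $(ii)$ from $(i)$ by a pigeonhole argument on a column realizing $M_j$. The only difference is organizational—you run a descending induction on $j$ where the paper picks the rightmost offending column and derives a contradiction—which amounts to the same argument.
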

\begin{proof}
    For $(i)$, assume otherwise; that is, assume that there exists $D^*\in\mathrm{Min}(D)$ and $j\le c^*\le n$ for which the topmost cell in column $c^*$ of $D^*$ occupies a row $r^*>M_j$. Assume that column $c^*$ is the rightmost such column. Then $(r^*,c^*)\in D^*$ is the rightmost cell in row $r^*$ of $D^*$. Moreover, since the number of cells in column $c^*$ of $D^*$ must be strictly less than $r^*$ by assumption, there exists an empty position strictly below $(r^*,c^*)$ in column $c^*$ of $D^*$. Thus, $\mathcal{K}(D^*,r^*)\neq D^*$, contradicting our assumption that $D^*\in\mathrm{Min}(D)$.

    Now, for $(ii)$, assume that $c$ for $j\le c\le n$ is a column of $D$ containing $M_j$ cells. Then, considering $(i)$, for every $\tilde{D}\in \mathrm{Min}(D)$ and $1 \le \tilde{r} \le M_j$, we have $(\tilde{r},c)\in \tilde{D}$.
\end{proof}

Let $D$ be a diagram with nonempty columns $c_1<c_2<\hdots<c_n$ where $(r_i,c_i)\in D$ is the topmost cell in column $c_i$ of $D$ for $i\in[n]$. Define the diagrams $\widehat{D}_{c_i}\in \mathrm{KD}(D)$ for $i\in [n]$ iteratively as follows: Setting $\widehat{D}_{c_{n+1}}=D$, for $i\in [n]$, we define $\widehat{D}_{c_i}$ to be the diagram formed from $\widehat{D}_{c_{i+1}}$ by successively applying one Kohnert move at rows $r_i$ down to 1 in decreasing order. Intuitively, $\widehat{D}_{c_i}$ for $i\in [n]$ is formed by attempting to bottom justify the cells in columns $\tilde{c}>c_i$ as best as possible working from right to left. Note that, in general, many of the Kohnert moves applied in forming $\widehat{D}_{c_i}$ may be trivial. For the sake of brevity, such moves are not omitted. For an illustration of this construction, see Example~\ref{ex:lower}.

\begin{example}\label{ex:lower}
In Figure~\ref{fig:lower} below, for $D$ the leftmost diagram, we illustrate $\widehat{D}_4$, $\widehat{D}_3$, $\widehat{D}_2$, and $\widehat{D}_1$. Note that $\widehat{D}_4$ is formed from $D$ by successively applying a single Kohnert moves at rows $4$ down to $1$ in decreasing order; $\widehat{D}_3$ is formed from $\widehat{D}_4$ by successively applying a single Kohnert moves at rows $3$ down to $1$ in decreasing order; $\widehat{D}_2$ is formed from $\widehat{D}_3$ by successively applying a single Kohnert moves at rows $2$ down to $1$ in decreasing order \textup(each such move being trivial\textup); and $\widehat{D}_1$ is formed from $\widehat{D}_2$ by successively applying a single Kohnert moves at rows $3$ down to $1$ in decreasing order.
\begin{figure}[H]
    \centering
    $$\begin{tikzpicture}[scale=0.65]
    \draw (0,4.5)--(0,0)--(4.5,0);
    \node at (0.5, 2.5) {$\bigtimes$};
  \node at (0.5, 3.5) {$\bigtimes$};
    \draw (0,2)--(1,2)--(1,3)--(0,3);
    \draw (0,3)--(1,3)--(1,4)--(0,4);
    \node at (1.5, 1.5) {$\bigtimes$};
    \draw (1,1)--(2,1)--(2,2)--(1,2)--(1,1);
    \node at (2.5, 1.5) {$\bigtimes$};
    \node at (2.5, 2.5) {$\bigtimes$};
    \node at (2.5, 3.5) {$\bigtimes$};
    \draw (2,2)--(3,2)--(3,3)--(2,3)--(2,2);
    \draw (2,3)--(3,3)--(3,4)--(2,4)--(2,3);
    \draw (2,1)--(3,1)--(3,2)--(2,2)--(2,1);
    \node at (3.5, 3.5) {$\bigtimes$};
    \node at (3.5, 1.5) {$\bigtimes$};
    \draw (3,3)--(4,3)--(4,4)--(3,4)--(3,3);
    \draw (3,1)--(4,1)--(4,2)--(3,2)--(3,1);
  \node at (2,-1) {$D$};
\end{tikzpicture}\quad\quad\quad\quad \begin{tikzpicture}[scale=0.65]
    \draw (0,4.5)--(0,0)--(4.5,0);
    \node at (0.5, 2.5) {$\bigtimes$};
  \node at (0.5, 3.5) {$\bigtimes$};
    \draw (0,2)--(1,2)--(1,3)--(0,3);
    \draw (0,3)--(1,3)--(1,4)--(0,4);
    \node at (1.5, 1.5) {$\bigtimes$};
    \draw (1,1)--(2,1)--(2,2)--(1,2)--(1,1);
    \node at (2.5, 1.5) {$\bigtimes$};
    \node at (2.5, 2.5) {$\bigtimes$};
    \node at (2.5, 3.5) {$\bigtimes$};
    \draw (2,2)--(3,2)--(3,3)--(2,3)--(2,2);
    \draw (2,3)--(3,3)--(3,4)--(2,4)--(2,3);
    \draw (2,1)--(3,1)--(3,2)--(2,2)--(2,1);
    \node at (3.5, 1.5) {$\bigtimes$};
    \node at (3.5, 0.5) {$\bigtimes$};
    \draw (3,1)--(4,1)--(4,2)--(3,2)--(3,1);
    \draw (3,0)--(4,0)--(4,1)--(3,1)--(3,0);
  \node at (2,-1) {$\widehat{D}_4$};
\end{tikzpicture}\quad\quad\quad\quad \begin{tikzpicture}[scale=0.65]
    \draw (0,4.5)--(0,0)--(4.5,0);
    \node at (0.5, 2.5) {$\bigtimes$};
  \node at (0.5, 3.5) {$\bigtimes$};
    \draw (0,2)--(1,2)--(1,3)--(0,3);
    \draw (0,3)--(1,3)--(1,4)--(0,4);
    \node at (1.5, 1.5) {$\bigtimes$};
    \draw (1,1)--(2,1)--(2,2)--(1,2)--(1,1);
    \node at (2.5, 0.5) {$\bigtimes$};
    \node at (2.5, 1.5) {$\bigtimes$};
    \node at (2.5, 2.5) {$\bigtimes$};
    \draw (2,1)--(3,1)--(3,2)--(2,2)--(2,1);
    \draw (2,2)--(3,2)--(3,3)--(2,3)--(2,2);
    \draw (2,0)--(3,0)--(3,1)--(2,1)--(2,0);
    \node at (3.5, 1.5) {$\bigtimes$};
    \node at (3.5, 0.5) {$\bigtimes$};
    \draw (3,1)--(4,1)--(4,2)--(3,2)--(3,1);
    \draw (3,0)--(4,0)--(4,1)--(3,1)--(3,0);
  \node at (2,-1) {$\widehat{D}_3=\widehat{D}_2$};
\end{tikzpicture}\quad\quad\quad\quad \begin{tikzpicture}[scale=0.65]
    \draw (0,4.5)--(0,0)--(4.5,0);
    \node at (0.5, 2.5) {$\bigtimes$};
  \node at (0.5, 1.5) {$\bigtimes$};
    \draw (0,2)--(1,2)--(1,3)--(0,3);
    \draw (0,1)--(1,1)--(1,2)--(0,2);
    \node at (1.5, 1.5) {$\bigtimes$};
    \draw (1,1)--(2,1)--(2,2)--(1,2)--(1,1);
    \node at (2.5, 0.5) {$\bigtimes$};
    \node at (2.5, 1.5) {$\bigtimes$};
    \node at (2.5, 2.5) {$\bigtimes$};
    \draw (2,1)--(3,1)--(3,2)--(2,2)--(2,1);
    \draw (2,2)--(3,2)--(3,3)--(2,3)--(2,2);
    \draw (2,0)--(3,0)--(3,1)--(2,1)--(2,0);
    \node at (3.5, 1.5) {$\bigtimes$};
    \node at (3.5, 0.5) {$\bigtimes$};
    \draw (3,1)--(4,1)--(4,2)--(3,2)--(3,1);
    \draw (3,0)--(4,0)--(4,1)--(3,1)--(3,0);
  \node at (2,-1) {$\widehat{D}_1$};
\end{tikzpicture}$$
    \caption{$D$ and $\widehat{D}_i$ for $i\in [4]$}
    \label{fig:lower}
\end{figure}
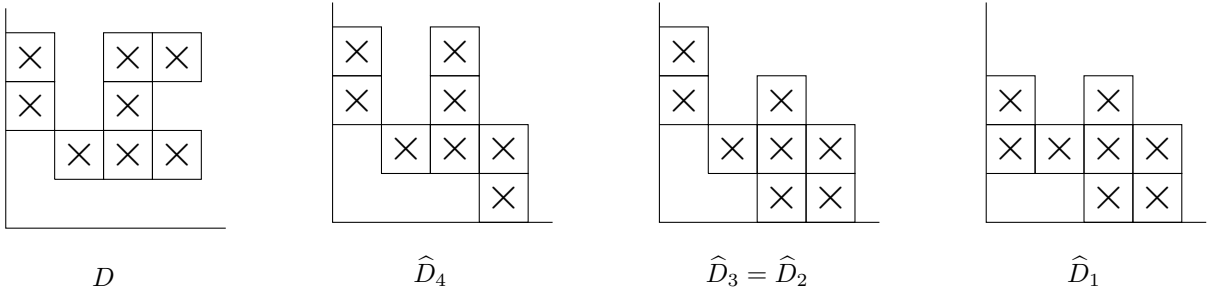
\end{example}



\begin{lemma}\label{lem:minprophelp2}
Let $D$ be a diagram and $c$ be a nonempty column of $D$. Then
\begin{enumerate}
    \item[$(i)$] for all $\tilde{c} < c$, $(r, \tilde{c}) \in \widehat{D}_c$ if and only if $(r, \tilde{c}) \in D$, i.e., $\widehat{D}_c$ and $D$ match in columns lying strictly to the left of column $c$;
    \item[$(ii)$] for all $\tilde{D}\preceq\widehat{D}_c$ and $\tilde{c}\ge c$, $(r,\tilde{c})\in \tilde{D}$ if and only if $(r,\tilde{c})\in\widehat{D}_c$, i.e., $\widehat{D}_c$ and $\tilde{D}$ match in columns lying weakly to the right of column $c$; and
    \item[$(iii)$] the topmost cell in column $c$ of $\widehat{D}_c$ occupies row $h(D,c)$.
\end{enumerate}
\end{lemma}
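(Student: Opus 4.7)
The plan is to prove parts (i), (ii), and (iii) simultaneously by reverse induction on $i$, where $c=c_i$. For the base case $i=n$, the column $c_n$ is the rightmost nonempty column of $D$, so $M=0$ and $h(D,c_n)=m_{c_n}$. I would establish the invariant that just before processing row $r$ in the construction of $\widehat{D}_{c_n}$, the topmost cell of column $c_n$ at rows $\le r$ in the current diagram sits exactly at row $r$. Prove this by downward induction on $r$: a Kohnert move on $(r,c_n)$ lands at the highest empty row below in column $c_n$, which forces $(r-1,c_n)$ to be occupied after the move, either because the cell moved to $(r-1,c_n)$ or because $(r-1,c_n)$ was already occupied and blocked it from going there. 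Since $c_n$ is rightmost, the invariant forces the rightmost cell in every processed row to be $(r,c_n)$, so every non-trivial move acts on column $c_n$, yielding (i). At the end, column $c_n$ is bottom-justified at rows $1,\ldots,m_{c_n}$, yielding (iii). For (ii), any Kohnert move on column $c_n$ from $\tilde{D}\preceq\widehat{D}_{c_n}$ is trivial, and moves on columns $<c_n$ preserve column $c_n$.

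For the inductive step $i<n$, assume the lemma for $c_{i+1},\ldots,c_n$. Iterating (ii) yields that $\widehat{D}_{c_{i+1}}$ matches $\widehat{D}_{c_j}$ in columns $\ge c_j$ for each $j>i$; combined with (iii), the topmost of column $c_j$ in $\widehat{D}_{c_{i+1}}$ is at row $h(D,c_j)\le M$, a bound verified by checking each of the three cases defining $h$. In particular, when $M>0$, the column $c_{j^*}$ with $m_{c_{j^*}}=M$ is bottom-justified at rows $1,\ldots,M$ in $\widehat{D}_{c_{i+1}}$. I would then split into the three cases for $h(D,c_i)$. In Case A ($M>r_i$), column $c_{j^*}$ has cells in every row $1,\ldots,r_i$, so the rightmost cell in every processed row lies in column $\ge c_{j^*}>c_i$ and every Kohnert move is trivial by inductive (ii); hence $\widehat{D}_{c_i}=\widehat{D}_{c_{i+1}}$ and topmost of column $c_i$ remains at $r_i=h(D,c_i)$. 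In Case C ($r_i>M>m_{c_i}$), for rows $r>M$ the columns $>c_i$ have no cells, so applying the base-case invariant to column $c_i$ pushes its topmost down one row at a time from $r_i$ until it reaches row $M$, where blocking by column $c_{j^*}$ trivializes all subsequent moves, giving topmost at $M=h(D,c_i)$. In Case B ($m_{c_i}\ge M$), columns $>c_i$ are bottom-justified at rows $\le M\le m_{c_i}$; applying the base-case invariant to column $c_i$ on rows in $(M,r_i]$ drives column $c_i$ into bottom-justified form at rows $1,\ldots,m_{c_i}$, with moves at rows $\le M$ trivialized by blocking, giving topmost at $m_{c_i}=h(D,c_i)$. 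In every case, (i) follows because the invariant implies rightmost cells in every processed row lie in column $\ge c_i$, ruling out moves on columns $<c_i$; (ii) follows by combining iterated inductive (ii) with the observation that column $c_i$ of $\widehat{D}_{c_i}$ is either bottom-justified (Case B) or shielded from non-trivial moves by $c_{j^*}$ (Cases A and C).

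The main obstacle is Case B of the inductive step, where the argument must simultaneously track the downward push of column $c_i$'s topmost for rows $r>M$ and the inertness of cells at rows $\le M$ (blocked by $c_{j^*}$); crafting an invariant for column $c_i$ that smoothly transitions between these two regimes is the most delicate step. A related subtlety is verifying (i) throughout the inductive step, since ruling out non-trivial moves on columns $<c_i$ depends on the invariant implying that in every processed row the rightmost cell lies in column $\ge c_i$, which in turn leverages both the structure of columns $>c_i$ (from inductive (iii)) and the fact that no nonempty columns lie strictly between $c_i$ and $c_{i+1}$.
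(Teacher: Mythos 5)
Your proposal is correct and follows essentially the same route as the paper's proof: reverse induction over the nonempty columns, a base case in which the rightmost column's topmost cell descends one row per move until the column is bottom-justified, and an inductive step split into the same three cases as the definition of $h$, with nontrivial moves in columns weakly right of $c_{i+1}$ ruled out via inductive property $(ii)$ (your explicit bottom-justified blocking column $c_{j^*}$ plays exactly the role of the paper's observation $(\ast)$). The only quibbles are cosmetic: in Case B the columns to the right of $c_i$ need not all be bottom-justified (only that their cells lie weakly below row $M$, together with $c_{j^*}$ itself, which is all your argument actually uses), and your trichotomy, like the paper's stated definition of $h$, omits the boundary case $M=r_i>m_{c_i}$, which is handled verbatim by your Case A argument.
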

\begin{proof}
    Assume that $\mathrm{cwt}(D)=(m_1,\hdots,m_n)$. Moreover, for $i\in [n]$ with $m_i>0$, suppose that $(r_i,i)\in D$ is the topmost cell in column $i$ of $D$. Starting with $\widehat{D}_{n}$, note that by definition, if $r_n>m_n$, then 
    \begin{itemize}
        \item when applying Kohnert moves at rows $r_n$ down to $m_n+1$ in decreasing order the topmost cell in column $n$ of the corresponding diagram moves to a lower row; and
        \item after applying the Kohnert move at row $m_n+1$, the cells in column $n$ are bottom justified so that the remaining Kohnert moves applied in forming $\widehat{D}_{n}$ do nothing.
    \end{itemize}
    In the case that $r_n=m_n$, then the cells in column $n$ are bottom justified in $D$, $D=\widehat{D}_{n}$ and all Kohnert moves applied in forming $\widehat{D}_{n}$ do nothing. Consequently, in either case, $(i)$ through $(iii)$ hold.

    Now, assume the result holds for $\widehat{D}_{j}$ for $1<j\le n$. If $m_{j-1}=0$, then $\widehat{D}_{j-1}=\widehat{D}_{j}$ and, evidently, $(i)$ through $(iii)$ hold for $\widehat{D}_{j-1}$. So, assume that $m_{j-1}>0$. Let $M$ denote $\max\{m_i~|~j\le i\le n\}$. Note that since properties $(i)$-$(iii)$ hold in $\widehat{D}_{j}$, $$(\ast)\quad \text{the rightmost cells in rows $1\le r\le M$ of $\widehat{D}_j$ must contain no empty positions below}.$$ To see this, note that otherwise there would exist $(r,k)\in \widehat{D}_{j}$ with $j\le k\le n$ which is rightmost in its row and for which there is a max $r^*<r$ such that $(r^*,k)$ is an empty position in $\widehat{D}_{j}$; but then $$\widehat{D}_{j}\succeq D^*=\mathcal{K}(\widehat{D}_{j},r)=\widehat{D}_{j}\ldownarrow^{(r,k)}_{(r^*,k)},$$ contradicting our assumption that $\widehat{D}_{j}$ satisfies property $(ii)$. Now, recall that $\widehat{D}_{j-1}$ is formed from $\widehat{D}_{j}$ by sequentially applying Kohnert moves at rows $r_{j-1}$ down to $1$ in decreasing order. There are three cases.
    \bigskip

    \noindent
    \textbf{Case 1:} $M\ge r_{j-1}$. In this case, considering $(\ast)$, it follows that $\widehat{D}_{j}=\widehat{D}_{j-1}$ so that property $(i)$ holds for $\widehat{D}_{j-1}$ since it holds for $\widehat{D}_{j}$. Moreover, the topmost cell in column $j-1$ of $\widehat{D}_{j-1}$ occupies row $r_{j-1}=h(\widehat{D}_{j-1},j-1)$, so that property $(iii)$ holds for $\widehat{D}_{j-1}$. Finally, considering Lemma~\ref{lem:minprophelp} (i), since property $(ii)$ holds in $\widehat{D}_j$, it follows that all cells in columns $c$ of $\widehat{D}_j$ satisfying $j\le c\le n$ lie weakly below row $M$. Thus, since $r_{j-1}\le M$ and $\widehat{D}_{j}=\widehat{D}_{j-1}$, once again considering $(\ast)$, it follows that property $(ii)$ holds in $\widehat{D}_{j-1}$ as well.
    \bigskip

    \noindent
    \textbf{Case 2:} $r_{j-1}>M$. Assume that $r_{j-1}>m_{j-1}$; the other case following via a similar but simpler argument. In this case, while forming $\widehat{D}_{j-1}$ from $\widehat{D}_{j}$, when applying Kohnert moves at rows $r_{j-1}$ down to $m_{j-1}+1$ in decreasing order, the topmost cell in column $j-1$ of the corresponding diagram moves to a lower row. After applying the Kohnert move at row $m_{j-1}+1$, the cells of column $j-1$ are bottom justified and, considering $(\ast)$, the remaining Kohnert moves applied in forming $\widehat{D}_{j-1}$ do nothing. Consequently, property $(i)$ holds. Moreover, the topmost cell in column $j-1$ of $\widehat{D}_{j-1}$ occupies row $m_{j-1}=h(D,j-1)$ so that property $(iii)$ holds in $\widehat{D}_{j-1}$. Finally, note that the rightmost cell in rows $1$ up to $m_{j-1}>M$ in $\widehat{D}_{j-1}$ belong to either column $j-1$ which is bottom justified, or column $k$ for $j-1<k\le n$. Thus, once again considering $(\ast)$, $\mathcal{K}(\widehat{D}_{j-1},\tilde{r})=\widehat{D}_{j-1}$ for $1\le \tilde{r}\le m_{j-1}$. As, arguing in a similar manner to as in Case 1, all cells in columns $c$ for $j-1\le c\le n$ lie weakly below row $m_{j-1}$, it follows that $(ii)$ holds in $\widehat{D}_{j-1}$.
    \bigskip

    \noindent
    \textbf{Case 3:} $r_{j-1}>M>m_{j-1}$. In this case, while forming $\widehat{D}_{j-1}$ from $\widehat{D}_{j}$, when applying Kohnert moves at rows $r_{j-1}$ down to $M+1$ in decreasing order, the topmost cell in column $j-1$ of the corresponding diagram moves to a lower row. Since $M>m_{j-1}$, after applying the Kohnert move at row $M+1$, the topmost cell of column $j-1$ occupies row $M=h(D,j-1)$. Considering $(\ast)$, the remaining Kohnert moves applied in forming $\widehat{D}_{j-1}$ from $\widehat{D}_{j}$ do nothing. Consequently, we have that both properties $(i)$ and $(iii)$ hold in $\widehat{D}_{j-1}$. Now, arguing in a similar manner to as in Case 1, we have that all cells in columns $c$ for $j-1\le c\le n$ lie weakly below row $M$. Thus, since the rightmost cell in rows $1\le r\le M$ belong to a column $k$ for $j-1<k\le n$, once again considering $(\ast)$, it follows that property $(iii)$ holds in $\widehat{D}_{j-1}$.
    \bigskip

    \noindent
    The result follows.
\end{proof}

Using the results above, we now prove Proposition~\ref{prop:highestcellmin} which, as noted above, shows that $h(D,c)$ corresponds to the highest possible row occupied by the topmost cell in column $c$ of a diagram belonging to $\textrm{Min}(D)$.

\begin{prop}\label{prop:highestcellmin}
    Let $D$ be a diagram and suppose that column $c$ of $D$ is nonempty. Then $$h(D,c)=\max\{r~|~(r,c)\in \tilde{D}\in\mathrm{Min}(D)\}.$$
\end{prop}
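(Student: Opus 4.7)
The plan is to establish the two inequalities $h(D,c)\le\max\{r\mid(r,c)\in\tilde{D}\in\mathrm{Min}(D)\}$ and its reverse. The main tools will be Lemma~\ref{lem:minprophelp2}, which packages the structural properties of $\widehat{D}_c$, together with Lemma~\ref{lem:minprophelp}(i), which restricts the rows of cells in columns weakly to the right of column $c$ for any minimal diagram. These will be combined with the elementary observation that Kohnert moves preserve the column of each cell and only move cells downward.

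For the lower bound, I would work with the auxiliary diagram $\widehat{D}_c\in\mathrm{KD}(D)$. By Lemma~\ref{lem:minprophelp2}(iii) the topmost cell in column $c$ of $\widehat{D}_c$ occupies row $h(D,c)$, and by Lemma~\ref{lem:minprophelp2}(ii) any further sequence of Kohnert moves applied to $\widehat{D}_c$ will not alter its cells in columns weakly right of $c$; in particular, column $c$ is frozen. Choosing any $\tilde{D}\in\mathrm{Min}(\widehat{D}_c)\subseteq\mathrm{Min}(D)$ then produces a minimal diagram whose topmost cell in column $c$ still sits in row $h(D,c)$, giving $h(D,c)\le\max\{r\mid(r,c)\in\tilde{D}\in\mathrm{Min}(D)\}$.

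For the reverse inequality, I would fix an arbitrary $\tilde{D}\in\mathrm{Min}(D)$ and let $r^\ast$ denote the row of the topmost cell in column $c$ of $\tilde{D}$. Since Kohnert moves only push cells downward within their original column, one immediately obtains $r^\ast\le r$. Setting $M=\max\{m_i\mid c<i\le n\}$ as in the definition of $h(D,c)$, Lemma~\ref{lem:minprophelp}(i) applied with $j=c+1$ guarantees that every cell of $\tilde{D}$ in columns $c+1,\ldots,n$ lies weakly below row $M$. Consequently, if $r^\ast>M$, then $(r^\ast,c)$ is the rightmost cell of its row in $\tilde{D}$, and the minimality of $\tilde{D}$ together with the requirement that $\mathcal{K}(\tilde{D},r^\ast)=\tilde{D}$ forces each of $(1,c),(2,c),\ldots,(r^\ast,c)$ to belong to $\tilde{D}$, yielding $m_c\ge r^\ast$.

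I expect the only delicate part to be the case analysis matching the three branches of $h(D,c)$. When $M>r$, we have $r^\ast\le r=h(D,c)$ and we are done; when $m_c\ge M$, either $r^\ast\le M\le m_c=h(D,c)$, or $r^\ast>M$ forces $r^\ast\le m_c=h(D,c)$; and when $r>M>m_c$, the possibility $r^\ast>M$ would require $r^\ast\le m_c<M$, a contradiction, so $r^\ast\le M=h(D,c)$. Combining the resulting upper bound with the lower bound of the previous paragraph yields the claimed equality.
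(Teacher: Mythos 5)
Your proof is correct, and while it leans on exactly the same machinery as the paper --- Lemma~\ref{lem:minprophelp}, Lemma~\ref{lem:minprophelp2}, and the diagrams $\widehat{D}_c$ --- it is organized rather differently. The paper splits into the three branches of the definition of $h(D,c)$ and, within each branch, proves both that $h(D,c)$ is achieved and that it is an upper bound, using a different auxiliary diagram in each case (notably $\widehat{D}_{c^*}$ for the column just to the right of $c$ when $M>r$, with an ad hoc ``there is always a blocker in row $r$ to the right'' argument). You instead prove the two inequalities separately and each one uniformly: achievability follows in all cases at once from $\widehat{D}_c$ via Lemma~\ref{lem:minprophelp2}(ii)--(iii) together with $\mathrm{Min}(\widehat{D}_c)\subseteq\mathrm{Min}(D)$, and the upper bound follows from the single observation that if the topmost cell of column $c$ in a minimal $\tilde{D}$ sits in a row $r^\ast>M$ then it is rightmost in its row, so minimality forces column $c$ to be filled up to row $r^\ast$ and hence $r^\ast\le m_c$; the three branches then reduce to a two-line check. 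This is a genuine streamlining: it avoids re-deriving achievability case by case and makes clear that the only real content of the upper bound is the ``full column'' dichotomy. Two small points to tidy up: dispose of the rightmost nonempty column separately (there $M$ is a maximum over an empty set and Lemma~\ref{lem:minprophelp}(i) with $j=c+1$ is out of range, though the statement is immediate in that case, as the paper notes), and justify $r^\ast\le r$ by observing that the maximum occupied row of a fixed column is weakly decreasing under Kohnert moves (the topmost \emph{position}, not a tracked cell, is what matters).
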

\begin{proof}
    Observe that if column $c$ is the rightmost nonempty column of $D$, then the result is immediate. So, assume that column $c$ is not the rightmost nonempty column of $D$. Moreover, assume that $(r,c)\in D$ is the topmost cell in column $c$ of $D$. Then, evidently, $r\ge \max\{\tilde{r}~|~(\tilde{r},c)\in \tilde{D}\in\mathrm{Min}(D)\}$. Assume that $M$ is the maximum number of cells contained in a column $\tilde{c}>c$ of $D$ and $m_c$ is the number of cells contained in column $c$ of $D$. We break the proof into three cases.
    \bigskip

    \noindent
    \textbf{Case 1:} $M>r$. Assume that $c^*$ is the leftmost nonempty column of $D$ lying strictly to the right of column $c$. Consider the diagram $\widehat{D}_{c^*}$. Note that, applying Lemma~\ref{lem:minprophelp2} (i), we have $(r,c)\in\widehat{D}_{c^*}$. We claim that $(r,c)\in D^*$ for some $\widehat{D}_{c^*}\succeq D^*\in \mathrm{Min}(D)$. To see this, applying Lemma~\ref{lem:minprophelp2} (ii), for all $\tilde{D}\preceq \widehat{D}_{c^*}$, $\tilde{c}\ge c^*$, and $\tilde{r}\ge 1$, we have that $(\tilde{r},\tilde{c})\in \tilde{D}$ if and only if $(\tilde{r},\tilde{c})\in \widehat{D}_{c^*}$. Consequently, there exists $D^*\in \mathrm{Min}(D)$ which matches $\widehat{D}_{c^*}$ in columns $\tilde{c}\ge c^*$. Considering Lemma~\ref{lem:minprophelp} (ii), it follows that for each $\tilde{r}$ satisfying $1\le \tilde{r}\le M$, there exists a column $\widehat{c}\ge c^*>c$ in $\widehat{D}_{c^*}$ for which $(\tilde{r},\widehat{c})\in\widehat{D}_{c^*}$. Thus, since $r<M$, it follows that $(r,c)\in \tilde{D}$ for all $\tilde{D}\preceq \widehat{D}_{c^*}$, i.e., $(r,c)\in D^*$, as claimed. Thus, in this case we have $r=h(D,c)=\max\{\tilde{r}~|~(\tilde{r},c)\in \tilde{D}\in\mathrm{Min}(D)\}$, as desired.
    \bigskip

    \noindent
    \textbf{Case 2:} $m_c\ge M$. Applying Lemma~\ref{lem:minprophelp} (i) to columns $\tilde{c}\ge c$, it follows that for all $\tilde{D}\in\mathrm{Min}(D)$, the cells of column $c$ occupy rows weakly below row $m_c$. As column $c$ contains exactly $m_c$ cells, it follows that for all $\tilde{D}\in\mathrm{Min}(D)$, the topmost cell in column $c$ occupies row $m_c$. Thus, $m_c=h(D,c)=\max\{\tilde{r}~|~(\tilde{r},c)\in \tilde{D}\in\mathrm{Min}(D)\}$, as desired.
    \bigskip

    \noindent
    \textbf{Case 3:} $r>M>m_c$. Applying Lemma~\ref{lem:minprophelp} (i) to columns $\tilde{c}\ge c$, it follows that for all $\tilde{D}\in\mathrm{Min}(D)$, the cells of column $c$ occupy rows weakly below row $M$. Thus, $M\ge\max\{\tilde{r}~|~(\tilde{r},c)\in \tilde{D}\in\mathrm{Min}(D)\}$. Applying Lemma~\ref{lem:minprophelp2} to $\widehat{D}_c$, it follows that there exists $D^*\in\mathrm{Min}(D)$ for which $D^*\preceq \widehat{D}_c$ and $(M,c)\in D^*$ is the topmost cell in column $c$ of $D^*$. Thus, $M=h(D,c)=\max\{\tilde{r}~|~(\tilde{r},c)\in \tilde{D}\in\mathrm{Min}(D)\}$, as desired.
\end{proof}

 The following is an immediate corollary of Proposition~\ref{prop:highestcellmin}.

\begin{theorem}\label{thm:maxmin}
    Let $D$ be a diagram with nonempty columns $\{c_1<\hdots<c_n\}$ and suppose that for $i\in [n]$, column $c_i$ of $D$ contains $n_i$ cells. Then $$\max\{\mathrm{empty}(\tilde{D})~|~\tilde{D}\in\mathrm{Min}(D)\}=\sum_{i=1}^nh(D,c_i)-n_i.$$
\end{theorem}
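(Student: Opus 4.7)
The plan is to reduce the optimization to per-column data. Every Kohnert move slides a single cell within a single column, so column weights are preserved across $\mathrm{KD}(D)$; in particular, column $c_i$ of any $\tilde{D}\in\mathrm{KD}(D)$ still contains exactly $n_i$ cells. Letting $t_i(\tilde{D})$ denote the row of the topmost cell in column $c_i$ of $\tilde{D}$, it follows that
$$|\mathrm{empty}(\tilde{D})|=\sum_{i=1}^{n}\bigl(t_i(\tilde{D})-n_i\bigr).$$
Maximizing $|\mathrm{empty}(\tilde{D})|$ over $\tilde{D}\in\mathrm{Min}(D)$ therefore reduces to maximizing $\sum_{i}t_i(\tilde{D})$. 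Proposition~\ref{prop:highestcellmin} supplies the pointwise upper bound $t_i(\tilde{D})\le h(D,c_i)$ for each $i$, which immediately yields
$$\max_{\tilde{D}\in\mathrm{Min}(D)}|\mathrm{empty}(\tilde{D})|\le\sum_{i=1}^{n}\bigl(h(D,c_i)-n_i\bigr).$$

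The substantive step is to produce a single minimal diagram attaining every pointwise bound simultaneously; I would use $\widehat{D}_{c_1}$ as a universal witness. First, I would verify $\widehat{D}_{c_1}\in\mathrm{Min}(D)$: applying Lemma~\ref{lem:minprophelp2}(ii) with $c=c_1$, any $\tilde{D}\preceq\widehat{D}_{c_1}$ must agree with $\widehat{D}_{c_1}$ in every column $\ge c_1$, hence in every nonempty column (Kohnert moves cannot populate previously empty columns), so $\tilde{D}=\widehat{D}_{c_1}$. Next, I would compute $t_i(\widehat{D}_{c_1})$. The iterative construction gives the chain $\widehat{D}_{c_1}\preceq\widehat{D}_{c_2}\preceq\cdots\preceq\widehat{D}_{c_n}$, so Lemma~\ref{lem:minprophelp2}(ii) with $c=c_i$ forces $\widehat{D}_{c_1}$ and $\widehat{D}_{c_i}$ to agree in columns $\ge c_i$; Lemma~\ref{lem:minprophelp2}(iii) then identifies the topmost cell of column $c_i$ in $\widehat{D}_{c_i}$ as occupying row $h(D,c_i)$. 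Combining these, $t_i(\widehat{D}_{c_1})=h(D,c_i)$ for every $i$, delivering the matching lower bound.

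I do not foresee a serious obstacle — the theorem really is a corollary once the machinery of Section~\ref{sec:min} is in place. The only subtlety worth emphasizing is recognizing that $\widehat{D}_{c_1}$ is a single minimal diagram that simultaneously witnesses $h(D,c_i)$ for every $i$, a fact already encoded in Lemma~\ref{lem:minprophelp2}. Column-weight invariance under Kohnert moves, while immediate from the definition, underlies the whole argument and deserves a brief explicit mention.
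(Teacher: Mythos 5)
Your proof is correct and takes essentially the same route as the paper's: the pointwise upper bound comes from Proposition~\ref{prop:highestcellmin}, and $\widehat{D}_{c_1}$ serves as the single minimal diagram attaining every bound simultaneously via Lemma~\ref{lem:minprophelp2}. The details you make explicit --- column-weight invariance, the per-column decomposition of $|\mathrm{empty}(\tilde{D})|$, and tracing the chain $\widehat{D}_{c_1}\preceq\cdots\preceq\widehat{D}_{c_n}$ to extract $t_i(\widehat{D}_{c_1})=h(D,c_i)$ --- are exactly what the paper's terser proof leaves implicit.
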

\begin{proof}
    Considering Proposition~\ref{prop:highestcellmin}, it remains to show that there exists $D^*\in\mathrm{Min}(D)$ in which for each nonempty column $c$, the topmost cell in column $c$ of $D^*$ occupies row $h(D,c)$. Applying Lemma~\ref{lem:minprophelp2} to $\widehat{D}_{c_1}$, we have that $\widehat{D}_{c_1}\in\mathrm{Min}(D)$ has the desired properties.
\end{proof}

Finally, to show that 
\begin{equation*}
    \mathrm{mc}(D)=|\mathrm{empty}(D)|-\max_{\widehat{D}\in\mathrm{Min}(D)}|\mathrm{empty}(\widehat{D})|.
\end{equation*} 
we require the following lemma concerning $\mathrm{empty}(D)$.

\begin{lemma}\label{lem:empty}
    For a diagram $D$, if $\tilde{D}=\mathcal{K}(D,r)$ for $r\in\mathbb{Z}_{>0}$, then $$|\mathrm{empty}(D)|-1\le |\mathrm{empty}(\tilde{D})|\le|\mathrm{empty}(D)|.$$ Moreover, if $\tilde{D}$ is formed from $D$ by moving the topmost cell in a column, then $|\mathrm{empty}(D)| - 1 = |\mathrm{empty}(\tilde{D})|$.
\end{lemma}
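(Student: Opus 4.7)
Write the move as $\tilde{D}=D\ldownarrow^{(r,c)}_{(r',c)}$, so that $(r,c)\in D$ is the rightmost cell of row $r$, $(r',c)\notin D$ is the first empty position strictly below $(r,c)$ in column $c$, and $(\tilde r,c)\in D$ for all $r'<\tilde r<r$. The plan is to localize the change in $\mathrm{empty}$ to column $c$ and then do a small case analysis based on whether $(r,c)$ is the topmost cell of column $c$ in $D$.

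The first step is to observe that for any column $k\neq c$, the cells of $D$ and $\tilde{D}$ in column $k$ coincide, so membership in $\mathrm{empty}$ depends only on data in column $k$ (namely on which positions are occupied and which row is topmost in that column). Hence $\mathrm{empty}(D)$ and $\mathrm{empty}(\tilde{D})$ agree outside column $c$, and it suffices to track the change in column $c$.

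The second step is the column-$c$ case split. Within column $c$ the only changes are that $(r,c)$ is removed and $(r',c)$ is added. In every case $(r',c)$ lies in $\mathrm{empty}(D)\setminus\mathrm{empty}(\tilde{D})$, since $(r',c)\in D$ now has $(r,c)$ above it but is occupied in $\tilde{D}$. I split on whether $(r,c)$ is the topmost cell of column $c$ in $D$:
\begin{itemize}
\item If $(r,c)$ is not topmost, let $(s,c)\in D$ with $s>r$; this cell persists in $\tilde{D}$, so $(r,c)\in\mathrm{empty}(\tilde{D})\setminus\mathrm{empty}(D)$. The topmost row of column $c$ does not change, and all positions $(k,c)$ with $k\notin\{r,r'\}$ retain their membership in $\mathrm{empty}$. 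Hence one position is lost and one is gained, giving $|\mathrm{empty}(\tilde{D})|=|\mathrm{empty}(D)|$.
\item If $(r,c)$ is topmost, then $(r,c)\notin\mathrm{empty}(\tilde{D})$ because no cell of column $c$ in $\tilde{D}$ lies above row $r$ (the new top is at most $r-1$). The remaining positions $(k,c)$ with $k<r$ and $k\neq r'$ keep their $\mathrm{empty}$-status, because the topmost row of column $c$ in $\tilde{D}$ is still strictly above any such $k$. Thus one position is lost and none is gained, giving $|\mathrm{empty}(\tilde{D})|=|\mathrm{empty}(D)|-1$.
\end{itemize}

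Combining the two cases proves both the two-sided bound $|\mathrm{empty}(D)|-1\le|\mathrm{empty}(\tilde{D})|\le|\mathrm{empty}(D)|$ and the ``moreover'' claim that equality with the lower bound occurs precisely when the moved cell was the topmost cell of its column. The only subtlety is in the second bullet: when $r'<r-1$, the rows strictly between $r'$ and $r$ in column $c$ stay occupied, so no new empty positions appear there; and the positions in column $c$ above row $r-1$ were never empty in $D$ (the top was $r$) and are still not empty in $\tilde{D}$ (the top is at most $r-1$). This bookkeeping is the only place one has to be careful.
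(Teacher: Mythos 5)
Your argument is correct and takes essentially the same route as the paper's proof: both localize the change to column $c$ and split on whether the moved cell $(r,c)$ is topmost in its column, observing that $(r',c)$ always leaves $\mathrm{empty}$ while $(r,c)$ joins it exactly when some cell of column $c$ remains above row $r$. The only omission is the trivial case $\mathcal{K}(D,r)=D$ (which the statement permits and the paper dispatches in one line), and a harmless slip where you write $(r',c)\in D$ when you mean $(r',c)\notin D$.
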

\begin{proof}
    Note that if $\tilde{D}=D$, then the result is immediate. So, assume that $$D\neq \tilde{D}=\mathcal{K}(D,r)=D\ldownarrow^{(r,c)}_{(\tilde{r},c)}.$$ If there exists $r^*>r$ such that $(r^*,c)\in D$, then $$\mathrm{empty}(\tilde{D})=(\mathrm{empty}(D)\backslash \{(\tilde{r},c)\})\cup\{(r,c)\},$$ i.e., $|\mathrm{empty}(\tilde{D})|=|\mathrm{empty}(D)|$; otherwise, if $(r, c)$ is the topmost cell in column $c$, then $$\mathrm{empty}(\tilde{D})=\mathrm{empty}(D)\backslash \{(\tilde{r},c)\}$$ so that $|\mathrm{empty}(\tilde{D})|=|\mathrm{empty}(D)|-1$. 
\end{proof}

\begin{theorem}~\label{thm:min}
    For a diagram $D$, $$\mathrm{mc}(D)=|\mathrm{empty}(D)|-\max_{\widehat{D}\in\mathrm{Min}(D)}|\mathrm{empty}(\widehat{D})|.$$
\end{theorem}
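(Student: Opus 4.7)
The plan is to prove $\mathrm{mc}(D)\ge |\mathrm{empty}(D)|-\max_{\widehat{D}\in\mathrm{Min}(D)}|\mathrm{empty}(\widehat{D})|$ and the reverse inequality separately. The lower bound is the easy direction: given any $\widehat{D}\in\mathrm{Min}(D)$ and any $(r_1,\hdots,r_n)\in\mathrm{KSeq}(D,\widehat{D})$, one telescopes Lemma~\ref{lem:empty} to see that each of the $n$ nontrivial moves drops $|\mathrm{empty}|$ by at most one, so $n\ge |\mathrm{empty}(D)|-|\mathrm{empty}(\widehat{D})|$; minimizing over $\widehat{D}$ and sequences yields the claim.

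For the upper bound, the plan is to exhibit a concrete sequence of nontrivial Kohnert moves from $D$ to $\widehat{D}_{c_1}$ of length exactly $|\mathrm{empty}(D)|-|\mathrm{empty}(\widehat{D}_{c_1})|$. Because Lemma~\ref{lem:minprophelp2} gives $\widehat{D}_{c_1}\in\mathrm{Min}(D)$ and (the proof of) Theorem~\ref{thm:maxmin} gives $|\mathrm{empty}(\widehat{D}_{c_1})|=\max_{\widehat{D}\in\mathrm{Min}(D)}|\mathrm{empty}(\widehat{D})|$, such a sequence immediately establishes the upper bound. The candidate is the sequence of Kohnert moves already used to define $\widehat{D}_{c_1}$ iteratively from $D$, with the trivial moves discarded. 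By the second assertion of Lemma~\ref{lem:empty}, it is enough to prove the claim that \emph{every nontrivial Kohnert move applied during the step $\widehat{D}_{c_{i+1}}\to \widehat{D}_{c_i}$ moves the topmost cell of column $c_i$ in the current diagram}: such a move then drops $|\mathrm{empty}|$ by exactly one, so telescoping gives the required count.

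The main obstacle is verifying this topmost-cell claim, which I would extract from the case analysis already performed inside the proof of Lemma~\ref{lem:minprophelp2}. Case~1 contains no nontrivial move at all. In Cases~2 and~3 the nontrivial moves happen precisely at the rows $r_i, r_i-1, \hdots, h(D,c_i)+1$, and at each of those rows the rightmost cell of the current diagram is the (current) topmost cell of column $c_i$. Indeed, by Lemma~\ref{lem:minprophelp}(i) applied to any $\widetilde{D}\in\mathrm{Min}(D)$ with $\widetilde{D}\preceq \widehat{D}_{c_{i+1}}$, together with invariant~(ii) of Lemma~\ref{lem:minprophelp2}, every cell of $\widehat{D}_{c_{i+1}}$ in a column strictly to the right of $c_i$ sits weakly below row $\max\{m_{\tilde c}:\tilde c>c_i\}<r_i$; this property persists throughout the step because, by induction, the only nontrivial moves in the step affect cells in column $c_i$ itself. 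With this claim in hand, the two inequalities combine to yield the theorem.
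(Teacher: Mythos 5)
Your proposal is correct and follows essentially the same route as the paper: the lower bound by telescoping Lemma~\ref{lem:empty} over any Kohnert sequence, and the upper bound by exhibiting $\widehat{D}_{c_1}$ (minimal with the maximal number of empty positions, via Lemma~\ref{lem:minprophelp2} and Theorem~\ref{thm:maxmin}) as reachable by nontrivial moves that each displace the current topmost cell of a column and hence each drop $|\mathrm{empty}|$ by exactly one. Your extra care in extracting the topmost-cell claim from the case analysis of Lemma~\ref{lem:minprophelp2} is a point the paper passes over more quickly, but it is the same argument.
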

\begin{proof}
    Let $M=\max_{\widehat{D}\in\mathrm{Min}(D)}|\mathrm{empty}(\widehat{D})|$.
    Considering Lemma~\ref{lem:empty}, it follows that $\mathrm{mc}(D)\ge|\mathrm{empty}(D)|-M$. Thus, it remains to show that there exists $D^*\in \mathrm{Min}(D)$ for which $D^*$ can be formed from $D$ by applying a sequence of $|\mathrm{empty}(D)|-M$ Kohnert moves.

    Let $c_1<c_2<\cdots<c_n$ denote the nonempty columns of $D$. Note that, considering Proposition~\ref{prop:highestcellmin} and Lemma~\ref{lem:empty}, it suffices to show that there exists $D^*\in\mathrm{Min}(D)$ with $(h(D,c_i),c_i)\in D^*$ for $i\in [n]$ which can be formed from $D$ by applying only Kohnert moves affecting the position of the topmost cell of a column. To see this, note that by Proposition~\ref{prop:highestcellmin}, for such a $D^*\in \mathrm{Min}(D)$ we would have $|\mathrm{empty}(D^*)|=M$. Moreover, by Lemma~\ref{lem:empty}, if only Kohnert moves affecting the position of the topmost cell in a column are applied, then each such move decreases the number of empty positions by exactly one; that is, exactly $$|\mathrm{empty}(D)|-|\mathrm{empty}(D^*)|=|\mathrm{empty}(D)|-M$$ moves are applied in forming $D^*$. Now, take $\widehat{D}_{c_1}$. Considering Lemma~\ref{lem:minprophelp2}, we have that $\widehat{D}_{c_1}\in\mathrm{Min}(D)$ and $(h(D,c_i),c_i)\in\widehat{D}_{c_1}$ for $i\in [n]$. Moreover, by definition, $\widehat{D}_{c_1}$ can be formed from $D$ by applying only Kohnert moves affecting the position of the topmost cell of a column (ignore any trivial Kohnert moves in the construction). Thus, the result follows.
\end{proof}


Theorem~\ref{thm:min} in hand, it is straightforward to establish Theorem~\ref{thm:minmain}. 

\begin{proof}[Proof of Theorem~\ref{thm:minmain}]
    Recall that $D$ is assumed to have nonempty columns $c_1<c_2<\cdots<c_n$ where for $i\in [n]$, the topmost cell in column $c_i$ is in row $r_i$. In addition, we assume that for $i\in [n]$, column $c_i$ contains $n_i$ cells. Combining Theorems~\ref{thm:maxmin} and~\ref{thm:min}, we find that 
    \begin{align*}
        \mathrm{mc}(D)&=|\mathrm{empty}(D)|-\max_{\widehat{D}\in\mathrm{Min}(D)}|\mathrm{empty}(\widehat{D})|=|\mathrm{empty}(D)|-\left(\sum_{i=1}^nh(D,c_i)-n_i\right)\\
        &=\sum_{i=1}^n|\mathrm{empty}(D)\cap\{(\tilde{r},c_i)~|~\tilde{r}\ge 1\}|-\left(\sum_{i=1}^nh(D,c_i)-n_i\right)\\
        &=\sum_{i=1}^n|\mathrm{empty}(D)\cap\{(\tilde{r},c_i)~|~\tilde{r}\ge 1\}|-h(D,c_i)+n_i\\
        &=\sum_{i=1}^n[|\mathrm{empty}(D)\cap\{(\tilde{r},c_i)~|~\tilde{r}\ge 1\}|+n_i]-h(D,c_i)=\sum_{i=1}^nr_i-h(D,c_i).
    \end{align*}
    Thus, the result follows.
\end{proof}

\begin{example}
    Let $D$ be the diagram illustrated in Figure~\ref{fig:MCex} $(a)$. To compute $\mathrm{mc}(D)$, in Figure~\ref{fig:mcex} we include row labels to the left of each row, and below each column the maximum number of cells contained in a column lying strictly to the right. In addition, we label the topmost cell in each column $c$ by $h(D,c)$. Then, by Theorem~\ref{thm:minmain}, for each labeled cell in Figure~\ref{fig:mcex}, we subtract the label from the row value and sum up the resulting values. Here, we get 
    $$\mathrm{mc}(D)=(6-2)+(6-2)+(6-2)+(4-2)+(5-3)+(7-3)+(7-4)+(2-2)=23.$$
    \begin{figure}[H]
        \centering
        $$\scalebox{0.8}{\begin{tikzpicture}[scale=0.65]
    \node at (0.5, 1.5) {$2$};
        \node at (1.5, 1.5) {$\bigtimes$};
        \node at (2.5, 2.5) {$\bigtimes$};
        \node at (5.5, 2.5) {$\bigtimes$};
        \node at (1.5, 3.5) {$\bigtimes$};
        \node at (3.5, 3.5) {$\bigtimes$};
        \node at (1.5, 4.5) {$\bigtimes$};
        \node at (2.5, 4.5) {$\bigtimes$};
        \node at (1.5, 6.5) {$4$};
        \node at (5.5, 5.5) {$2$};
        \node at (6.5, 5.5) {$2$};
        \node at (-0.5, 0.5) {$1$};
        \node at (-0.5, 1.5) {$2$};
        \node at (-0.5, 2.5) {$3$};
        \node at (-0.5, 3.5) {$4$};
        \node at (-0.5, 4.5) {$5$};
        \node at (-0.5, 5.5) {$6$};
        \node at (-0.5, 6.5) {$7$};

        \node at (7.5, 2.5) {$\bigtimes$};
        \node at (7.5, 5.5) {$2$};

        \node at (4.5, 3.5) {$2$};

        \node at (3.5, 4.5) {$3$};

        \node at (3.5, 1.5) {$\bigtimes$};

        \node at (2.5, 6.5) {$3$};

        \draw (0,7.5)--(0,0)--(8.5,0);
        
        \draw (0,1)--(2,1)--(2,2)--(0,2)--(0,1);
        \draw (1,1)--(1,2);
        \draw (3,1)--(4,1)--(4,2)--(3,2)--(3,1);
        
        \draw (2,2)--(3,2)--(3,3)--(2,3)--(2,2);
        \draw (5,2)--(6,2)--(6,3)--(5,3)--(5,2);
        \draw (7,2)--(8,2)--(8,3)--(7,3)--(7,2);

        \draw (1,3)--(2,3)--(2,4)--(1,4)--(1,3);

        \draw (3,3)--(5,3)--(5,4)--(3,4)--(3,3);
        \draw (4,3)--(4,4);

        \draw (1,4)--(4,4)--(4,5)--(1,5);
        \draw (1,4)--(1,5);
        \draw (2,4)--(2,5);
        \draw (3,4)--(3,5);
        \draw (5,5)--(8,5)--(8,6)--(5,6)--(5,5);
        \draw (6,5)--(6,6);
        \draw (7,5)--(7,6);
        \draw (1,6)--(3,6)--(3,7)--(1,7)--(1,6);
        \draw (2,6)--(2,7);
        \node at (0.5,-0.5) {4};
        \node at (1.5,-0.5) {4};
        \node at (2.5,-0.5) {3};
        \node at (3.5,-0.5) {3};
        \node at (4.5,-0.5) {2};
        \node at (5.5,-0.5) {2};
        \node at (6.5,-0.5) {2};
        \node at (7.5,-0.5) {2};
\end{tikzpicture}}$$
        \caption{Example computing $\mathrm{mc}(D)$}
        \label{fig:mcex}
    \end{figure}
\end{example}

Considering the work above, we are led to the following algorithm for solving the minimum Kohnert move puzzle.

\begin{tcolorbox}[breakable, enhanced]
\centerline{\textbf{\underline{Solution to Minimum Kohnert Move Puzzle}}}
\bigskip

Given a diagram $D$ with nonempty columns $c_1<c_2<\cdots<c_n$, suppose that for $i\in [n]$, 
\begin{itemize}
    \item the cell $(r_i,c_i)\in D$ is the topmost in column $c_i$ of $D$,
    \item column $c_i$ of $D$ contains $n_i$ cells, and
    \item $M_i$ denotes the maximum number of cells contained in a column strictly to the right of column $c_i$ in $D$.
\end{itemize}
Then proceed as follows. For $i\in [n]$ in decreasing order:
\begin{enumerate}
    \item[] Step $i$: If $r_i>\max\{n_i,M_i\}$, apply sequentially one Kohnert move at rows $r_i$ down to $\max\{n_i,M_i\}+1$; otherwise, do nothing.
\end{enumerate}
Note that to solve the puzzle we apply Kohnert moves to lower the topmost cell in a column as far down as possible, working in columns from right to left.

\end{tcolorbox}

\printbibliography

\end{document}